\numberwithin{equation}{section}
\newtheorem{theo}{Theorem}[section]
\newtheorem{lemm}[theo]{Lemma}
\newtheorem{prop}[theo]{Proposition}
\newtheorem{rema}[theo]{Remark}
\numberwithin{equation}{section}
\newcommand{\R}{{\mathbb R}}
\begin{document}
\title{Global regularity and sharp decay to the 2D Hypo-Viscous compressible Navier-Stokes equations}

\author{Chen Liang}

\address{ School of Science,
	Shenzhen Campus of Sun Yat-sen University, Shenzhen 518107, China}

\email{liangch89@mail2.sysu.edu.cn}

\author{Zhaonan Luo}
\address{School of Science,
 Shenzhen Campus of Sun Yat-sen University, Shenzhen 518107, China}

\email{luozhn7@mail.sysu.edu.cn}

\author{Zhaoyang Yin}

\address{ School of Science,
Shenzhen Campus of Sun Yat-sen University, Shenzhen 518107, China}

\email{mcsyzy@mail.sysu.edu.cn }


\subjclass[2020]{35Q30, 35Q35, 35B40}

\keywords{compressible Navier-Stokes equations; hypo-viscousity; Optimal decay rate; The Fourier splitting method.}

\begin{abstract}
In this paper, we consider the global regularity and the optimal time decay rate for the 2D isentropic hypo-viscous compressible Navier-Stokes equations. Firstly, we prove that there exists a global strong solution with the small initial data are close to the constant equilibrium state in $H^s$ framework with $s>1$. Furthermore, by virtue of improved Fourier splitting method and the Littlewood-Paley decomposition theory, we then establish the optimal time decay rate for low regularity data.

\end{abstract}

\maketitle

\tableofcontents
\section{Introduction and the main results}
\subsection {Model and synopsis of related studies}

In this paper, we consider the following compressible Navier-Stokes (CNS) equations on the $\mathbb{R}^2$:
\begin{align}\label{eq0}
\left\{\begin{aligned} 
&\partial_{t}\rho+\mathrm{div}(\rho u)=0,\\ 
&\partial_{t}(\rho u)+\mu(-\Delta)^{\beta}u-(\mu+\nu)\nabla \mathrm{div}u+\mathrm{div}(\rho u\otimes u)+\nabla P(\rho)=0,\\
&\rho|_{t=0}=\rho_0,~~u|_{t=0}=u_0,
\end{aligned}\right.
\end{align}
where $\rho$ and $u$ are density and velocity, the pressure $P=\rho^{\gamma}\ (\gamma\ge1)$ is given a smooth function, $\mu>0$ and $\nu$ are  the shear viscosity and the bulk viscosity respectively satisfying the physical restrictions
\begin{align*}
 \mu+\nu\ge0.
 \end{align*}
  Fractional Laplacian operator $(-\Delta)^{\beta}u$ is defined by the Fourier transform
	$$
	\widehat{(-\Delta)^{\beta}u}=|\xi|^{2\beta}\widehat{u},\quad\quad \widehat{u}(\xi,t)=\mathscr{F}u(\xi,t)=\int_{\mathbb{R}^d}e^{-ix\cdot\xi}u(x,t) dx.
	$$
The CNS with $\beta=1$ is the classical barotropic CNS equations, which has been systemically studied in many articles. The existence of global strong solutions with small initial data in 3D has been obtained by Matsumura and Nishida \cite{1979Matsumura,1980Matsumura}. Moreover, they obtained the time decay rate under additionally assumption that initial data belong to $L^1$. More global well-posedness results of strong solutions can be found in \cite{MR1779621,MR2675485,MR2679372,MR2847531}.  
 Z. Xin \cite{xin1988blowup} proved there is no global non-trivial smooth solution for full CNS system without heat conduction if initial density is compactly supported. For the weak solutions of CNS equations, P. L. Lions \cite{1996Lions,1998Lions} proved there exists a global weak solution with finite energy. Later, Feireisl et al. did a series of works, which has enriched the theory of weak solutions for CNS equations, and readers may refer to \cite{2004Feireisl} for details.

Significant progress has been made over the past decades regarding the time decay rates for the multi-dimensional isentropic CNS equations with $\beta=1$. We assume that positive density tending to some positive constant at infinity (say 1, to simplify the notation).  Ponce \cite{PONCE1985399} proved the $L^{p}$ time decay rate with  small initial data in $H^{s}\cap W^{s,1}(s>2+\frac{d}{2})$. Y. Guo and Y. Wang \cite{Guo01012012} obtained the optimal time decay rates for the CNS equations and the Boltzmann equation with smallness assumption on $H^{s}(s>3)$, and initial data belongs to negative index Sobolev space $\dot{H}^{-s}(0\le s<\frac{3}{2})$. The method is purely based on energy estimates. R. Danchin and J. Xu \cite{MR3609245} proved the  time decay rates for CNS equations in the critical $L^{p}$ setting, where viscosity coefficients depend on $\rho$. Later, J. Xu \cite{Xu2019} established sharp time decay rate for CNS equations under small low frequency assumption in some Besov spaces with negative index. which improve the results from \cite{MR3609245}. Z. Xin and J. Xu \cite{MR4188989} prove same results, in particular, the restriction that the low frequency of initial data is enough small was removed
there. Spectral analysis is a important method to analysis the large-time behavior. One may refer to \cite{Li2011Large,DUAN2007220,1979Matsumura} for more details.  

When $0<\beta<1$, Y. Li, P. Qu, Z. Zeng and D. Zhang \cite{li2022} proved there exists the non-uniqueness of weak solutions for CNS equations (\ref{eq0}). S. Wang and S. Zhang \cite{MR4643428} studied the $L^2$ time decay rate of (\ref{eq0}) with the case $\mu+\nu=0$ and $\frac{1}{2}<\beta\le1$ in $\mathbb{R}^3$, where the initial data belong to $H^{4}\cap L^{1}$. 
Recently, S. Wang and S. Zhang \cite{MR4897629} obtained the time decay rate for the 3D Navier-Stokes-Poisson system with fractional dissipation, which can be regarded as an extension from  \cite{MR4643428}.

\subsection{Main results}
To our best knowledge, large time behavior for the CNS equations (\ref{eq0}) has not been
studied yet. In this paper, we mainly study the optimal time decay rate of global strong solutions for CNS equations (\ref{eq0}) with the case with $\mu=1$ and $\nu=-1$. Taking $\rho=a+1$, (\ref{eq0}) can be written as
\begin{align}\label{eq1}
\left\{\begin{aligned} 
&\partial_{t}a+\mathrm{div}u=-\mathrm{div}(au),\\ 
&\partial_{t}u+(-\Delta)^{\beta}u+\gamma\nabla a=K(a)\nabla a+G,\\
&a|_{t=0}=a_0,~~u|_{t=0}=u_0,
\end{aligned}\right.
\end{align}
where $$
K(a)=\gamma\frac{a}{1+a}+\frac{P^{\prime}(1)- P^{\prime}(1+a)}{1+a}, \\
$$
and
$$
G=-u\cdot\nabla u+\frac{a}{1+a}(-\Delta)^{\beta}u.
$$

We firstly establish global regularity for the solutions of (\ref{eq1}) with small data in $H^{s}(s>1)$ by standard energy estimate method and bootstrap method. Precisely, our first result states as follows:
\begin{theo}\label{1theo1}
Let $\frac{1}{2}\le\beta<1$, $s>1$. Let $(a,u)$ be a local strong solution of (\ref{eq1}) with the 
initial data $(a_{0},u_{0})\in H^{s}$. There exists a small constant $\delta $ such that if 
\begin{align*}
\|(a_{0},u_{0})\|_{H^{s}}\leq\delta,
\end{align*}
then the system (\ref{eq1}) admits a unique global strong solution $(a,u)\in C([0,\infty),H^{s})$. Moreover, we obtain thar for all $t>0$, there holds
\begin{align*}
\quad\frac{d}{dt}\left(\|(\sqrt{\gamma}a,u)\|_{H^{s}}^{2}+2k\langle\Lambda^{\beta-1}\nabla a,\Lambda^{\beta-1}u\rangle_{H^{s-2\beta+1}}\right)+k\gamma\|\Lambda^{\beta} a\|_{H^{s-2\beta+1}}^{2}+\|\Lambda^{\beta}u\|_{H^{s}}^{2}\leq 0,
\end{align*}
where $k$ is a sufficiently small constant.
\end{theo}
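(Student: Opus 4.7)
Since the hypothesis already furnishes a local strong solution $(a,u)\in C([0,T^*), H^s)$, the argument reduces to an \emph{a priori} estimate: assume $\|(a,u)\|_{L^\infty_T H^s}\le\eta\ll 1$ on some maximal interval, derive the displayed differential inequality, then close by a standard bootstrap. The natural object is the modified energy
\begin{equation*}
\mathcal{E}_s(t):=\|(\sqrt{\gamma}a,u)\|_{H^s}^{2}+2k\langle\Lambda^{\beta-1}\nabla a,\Lambda^{\beta-1}u\rangle_{H^{s-2\beta+1}},
\end{equation*}
which for $k$ small is equivalent to $\|(a,u)\|_{H^s}^2$ via Cauchy--Schwarz and the identity $\Lambda^{\beta-1}\nabla=\Lambda^{\beta}\mathcal{R}$ (with the Riesz transform $\mathcal{R}$). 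The cross term plays the Matsumura--Nishida role of manufacturing the dissipation on $a$ that is missing from the natural energy; the weights $\Lambda^{\beta-1}$ and the base space $H^{s-2\beta+1}$ are dictated by the fact that the only linear coupling between $a$ and $u$ is the pressure term $\gamma\nabla a$, and we want its pairing against $\Lambda^{\beta-1}u$ (at level $H^{s-2\beta+1}$) to yield exactly $-\gamma\|\Lambda^\beta a\|_{H^{s-2\beta+1}}^2$.

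\textbf{Step 1: $H^s$ energy identity.} Applying $\Lambda^s$ to \eqref{eq1} and pairing with $\gamma\Lambda^s a$ and $\Lambda^s u$, the two linear cross terms $\gamma\langle\Lambda^s\nabla a,\Lambda^s u\rangle$ and $\gamma\langle\Lambda^s\mathrm{div}\,u,\Lambda^s a\rangle$ cancel by integration by parts thanks to the $\sqrt{\gamma}$ weighting. This gives
\begin{equation*}
\frac{1}{2}\frac{d}{dt}\|(\sqrt{\gamma}a,u)\|_{H^s}^{2}+\|\Lambda^{\beta}u\|_{H^s}^{2}=\mathcal{N}_1,
\end{equation*}
where $\mathcal{N}_1$ collects the nonlinearities coming from $\mathrm{div}(au)$, $K(a)\nabla a$, and $G=-u\cdot\nabla u+\tfrac{a}{1+a}(-\Delta)^\beta u$. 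Using the two-dimensional embedding $H^s\hookrightarrow L^\infty$ (valid since $s>1$), tame product and Kato--Ponce commutator estimates, and composition estimates for $K(a)$ and $a/(1+a)$ (applicable because smallness in $H^s$ forces $\|a\|_{L^\infty}<1$), each contribution to $\mathcal{N}_1$ is bounded by $\|(a,u)\|_{H^s}$ times $\|\Lambda^\beta u\|_{H^s}^{2}+\|\Lambda^\beta a\|_{H^{s-2\beta+1}}^{2}$ plus harmless lower-order terms. At this stage only $u$ carries dissipation.

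\textbf{Step 2: cross-term estimate and combination.} Differentiating the coupling in time and substituting both equations, the $-\gamma\nabla a$ contribution from the $u$-equation produces the desired dissipation $-\gamma\|\Lambda^\beta a\|_{H^{s-2\beta+1}}^2$. The $-(-\Delta)^\beta u$ contribution produces $-\langle\Lambda^{\beta-1}\nabla a,\Lambda^{3\beta-1}u\rangle_{H^{s-2\beta+1}}$, which by Young is bounded by $\varepsilon\|\Lambda^\beta a\|_{H^{s-2\beta+1}}^{2}+C_\varepsilon\|\Lambda^\beta u\|_{H^s}^{2}$; the $-\mathrm{div}\,u$ contribution from $\partial_t a$ pairs against $\Lambda^{\beta-1}u$ at level $H^{s-2\beta+1}$ to yield a quantity dominated by $\|\Lambda^\beta u\|_{H^s}^2$ (the total derivative count is at most $2\beta$); the nonlinear remainders are treated as in Step~1. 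Multiplying this cross identity by $2k$ and adding to Step~1, I would pick $k$ small enough to (i) preserve $\mathcal{E}_s\simeq\|(a,u)\|_{H^s}^2$ and (ii) absorb all $\|\Lambda^\beta u\|_{H^s}^2$-flavoured errors into a fraction of the parabolic dissipation, and then invoke the bootstrap smallness to dominate the remaining nonlinear terms by half the right-hand-side budget. This produces exactly the stated inequality; integrating it yields a uniform bound $\|(a,u)(t)\|_{H^s}\le C\delta$, and a standard continuation argument extends the solution to $[0,\infty)$.

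\textbf{Main obstacle.} The most delicate term is $\tfrac{a}{1+a}(-\Delta)^\beta u$ appearing in $G$, which places $2\beta$ derivatives on $u$. In both Step~1 and Step~2 one must split it by Kato--Ponce so that the top-order derivative always lands on a dissipative factor: the worst piece in Step~1 is of size $\|a\|_{L^\infty}\|\Lambda^\beta u\|_{H^s}^2$, absorbed by smallness of $a$, while its Step~2 analogue must be controlled by a geometric mean of $\|\Lambda^\beta u\|_{H^s}$ and $\|\Lambda^\beta a\|_{H^{s-2\beta+1}}$ with a prefactor $\lesssim\|a\|_{H^s}$. Verifying that \emph{every} error term produced along the way genuinely fits the budget $k\gamma\|\Lambda^\beta a\|_{H^{s-2\beta+1}}^2+\|\Lambda^\beta u\|_{H^s}^2$, uniformly for $\tfrac{1}{2}\le\beta<1$ and $s>1$ (a range in which the index $s-2\beta+1$ can be arbitrarily close to $0$), is the technical core of the proof.
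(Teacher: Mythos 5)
Your proposal is correct and follows essentially the same route as the paper: the same modified energy with the Matsumura--Nishida-type cross term $2k\langle\Lambda^{\beta-1}\nabla a,\Lambda^{\beta-1}u\rangle_{H^{s-2\beta+1}}$ to manufacture the dissipation $k\gamma\|\Lambda^{\beta}a\|_{H^{s-2\beta+1}}^{2}$, the same treatment of the linear pieces ($-(-\Delta)^{\beta}u$ and $\mathrm{div}\,u$ absorbed into $\|\Lambda^{\beta}u\|_{H^{s}}^{2}$ via Young and interpolation, which is exactly where $\beta\ge\tfrac12$ enters), Kato--Ponce/product estimates for the nonlinearities including the delicate $\tfrac{a}{1+a}(-\Delta)^{\beta}u$ term, and a smallness bootstrap to close. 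No substantive difference from the paper's argument.
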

\begin{rema}
It should be noticed that there exists parabolic effects for $a$ and $u$, This phenomenon is unique to the case with $\frac{1}{2}\le\beta<1$.
\end{rema}
We shall study the time decay rate of the global small solutions to the system (\ref{eq1}) based on the Schonbek's \cite{Schonbek1985,Schonbek1991} strategy. By virtue of the improved Fourier splitting method, we obtain initial decay rate 
\begin{align*}
\|(a,u)\|_{H^s}\lesssim(1+t)^{-\frac{1}{4\beta}+\frac{1}{4}}.
\end{align*}
By virtue of the time weighted energy estimate, we improve the time decay rate to
\begin{align*}
\|(a,u)\|_{H^s}\lesssim(1+t)^{-\frac{1}{4\beta}}.
\end{align*}
Then by Littlewood-Paley theory and time decay rate of $(a,u)$, we can prove $(a,u)\in L^{ \infty}([0,\infty),\dot{B}_{2,\infty}^{-1})$. Finally, we obtain the optimal time decay rate
\begin{align*}
\|(a,u)\|_{H^s}\lesssim(1+t)^{-\frac{1}{2\beta}},
\end{align*}
which gives a rise to
\begin{align}\label{101}
(1+t)^{-1}\int_{0}^{t}(1+t^{\prime})^{\frac{1}{\beta}+1}\left(\|\Lambda^{\beta}a\|_{H^{s+1-2\beta}}^{2}+\|\Lambda^{\beta} u\|_{H^{s}}^{2}\right)dt^{\prime}\le C.
\end{align}
 We also want to use same method get $\beta$-order time decay rate. However, we can not closed the energy in $H^{s-\beta}$ because of  inner product estimate. Fortunately, we find (\ref{421}) holds,  which combines (\ref{101}) derive the $\beta$-order time decay rate. For highest order time decay rate, we construct time weighted energy functional and dissipation functional, which will help us close the energy estimate. By virtue of Fourier splitting method and time weighted energy estimate, one can derive the upper bound of decay rate for the highest derivative. Finally, we introduce a new weighted energy estimate instead of complex
spectral analysis to prove the lower bound of the decay rate. Our second results can be written as follows:
\begin{theo}\label{1theo2}
Let $\frac{1}{2}\le\beta<1$, $s>1$. Let $(a,u)$ be a strong solution of (\ref{eq1}) with the initial data  $(a_{0}, u_{0})$ under the condition in Theorem \ref{1theo1}. If additionally $\left(a_{0}, u_{0}\right) \in \dot{B}_{2, \infty}^{-1}$, then there exists  $C>0$  such that for every $t>0$ and $s_1\in[0,s]$,
\begin{align*}
\left\|\Lambda^{s_1}(a,u)\right\|_{L^{2}} \leq C(1+t)^{-\frac{s_1+1}{2 \beta}}.
\end{align*}
Furthermore, suppose that  $\left(a_{0}, u_{0}\right) \in H^{s+\beta}$  and  $0<\left|\int_{\mathbb{R}^{2}}\left(a_{0}, u_{0}\right) d x\right|$ , then there exists  $C_{\beta} \leq C $ such that
\begin{align*}
\left\|\Lambda^{s_1}(a,u)\right\|_{L^{2}} \geq \frac{C_{\beta}}{2}(1+t)^{-\frac{s_1+1}{2 \beta}}.
\end{align*}
\end{theo}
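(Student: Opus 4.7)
The plan is to run a four-stage bootstrap that takes the energy-dissipation inequality of Theorem \ref{1theo1} as a black box, then extract the matching lower bound by a direct pointwise-in-Fourier argument, following the roadmap sketched in the introduction.

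For the upper bound I would first apply Schonbek's Fourier splitting to the inequality in Theorem \ref{1theo1}. Decomposing $\mathbb{R}^{2}$ into $\{|\xi|\le g(t)\}$ and its complement with $g(t)^{2\beta}(1+t)\sim 1$, the high-frequency part of $\|(a,u)\|_{H^{s}}^{2}$ is absorbed into the dissipation $\|\Lambda^{\beta}a\|_{H^{s-2\beta+1}}^{2}+\|\Lambda^{\beta}u\|_{H^{s}}^{2}$, while a crude bound by $\|(a,u)\|_{L^{2}}^{2}$ on the low-frequency piece yields the initial rate $(1+t)^{-\frac{1}{4\beta}+\frac{1}{4}}$. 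A first time-weighted energy estimate then exploits the fact that the nonlinearities $\mathrm{div}(au)$, $K(a)\nabla a$ and $G$ are quadratic to upgrade this to $(1+t)^{-\frac{1}{4\beta}}$. Next I would establish $(a,u)\in L^{\infty}_{t}\dot{B}_{2,\infty}^{-1}$: applying a Littlewood--Paley block $\dot{\Delta}_{k}$ to \eqref{eq1}, using Duhamel against the linear hypo-dissipative semigroup, and feeding the rate $(1+t)^{-\frac{1}{4\beta}}$ into the nonlinear source bounds $2^{-k}\|\dot{\Delta}_{k}(a,u)\|_{L^{2}}$ uniformly in $k$ and $t$. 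With this Besov control the low-frequency integral improves to $\int_{|\xi|\le g(t)}|\widehat{(a,u)}|^{2}\,d\xi\lesssim g(t)^{2}$, giving the optimal $L^{2}$ rate $(1+t)^{-\frac{1}{2\beta}}$ and, after time integration, the control \eqref{101}.

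To propagate this rate to $\|\Lambda^{s_{1}}(a,u)\|_{L^{2}}$ for arbitrary $s_{1}\in[0,s]$ I would close a time-weighted estimate of the schematic form $\frac{d}{dt}\bigl[(1+t)^{(s_{1}+1)/\beta}\mathcal{E}_{s_{1}}(t)\bigr]+(1+t)^{(s_{1}+1)/\beta}\mathcal{D}_{s_{1}}(t)\lesssim(1+t)^{(s_{1}+1)/\beta-1}\mathcal{E}_{s_{1}-\beta}(t)$, where $\mathcal{E}_{s_{1}}$ augments $\|\Lambda^{s_{1}}(a,u)\|_{L^{2}}^{2}$ with the cross term $\langle\Lambda^{s_{1}+\beta-1}\nabla a,\Lambda^{s_{1}+\beta-1}u\rangle$ inherited from Theorem \ref{1theo1}; the growth on the right is swallowed by a second Fourier splitting together with the low-frequency Besov bound, and interpolation with the uniform $H^{s}$ estimate closes every intermediate case. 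For the lower bound, the condition $\bigl|\int(a_{0},u_{0})\,dx\bigr|>0$ yields $|\widehat{(a_{0},u_{0})}(\xi)|\ge c_{0}>0$ on a ball $|\xi|\le r_{0}$; writing the linear part of \eqref{eq1} in Fourier variables and inserting the already proven upper bound into the Duhamel term for the quadratic nonlinearities (which decays fast enough to be time-integrable), one deduces $|\widehat{(a,u)}(\xi,t)|\ge c_{0}/2$ on a shrinking ball $|\xi|\le r(t)$ with $r(t)^{2\beta}(1+t)\sim 1$; integrating $|\xi|^{2s_{1}}$ over this ball produces the matching bound $(1+t)^{-(s_{1}+1)/(2\beta)}$.

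The main obstacle is the middle step: because $\beta<1$ the dissipation on $a$ is only of order $\Lambda^{\beta}$, so one loses $2\beta$ derivatives at the top of the energy compared with the standard $\beta=1$ compressible case. This loss forces both the preliminary bootstrap into $\dot{B}_{2,\infty}^{-1}$ and the particular weighted functional above, in which the cross term is what manufactures parabolic gain for $a$ even at regularity $s_{1}=s$, and where the auxiliary inequality \eqref{101} serves as the bridge converting the optimal $L^{2}$ decay into a top-order weighted estimate of the correct order.
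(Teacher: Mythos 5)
Your upper-bound pipeline coincides with the paper's: Fourier splitting on the energy inequality of Theorem \ref{1theo1}, a first time-weighted estimate to reach $(1+t)^{-\frac{1}{4\beta}}$, persistence of the $\dot{B}^{-1}_{2,\infty}$ norm, the optimal $L^2$ rate, the integrated dissipation bound \eqref{101}, and finally a weighted functional with the cross term $\langle\Lambda^{s-\beta}\nabla a,\nabla\Lambda^{s-\beta}u\rangle$ at top order, with interpolation handling intermediate $s_1$. Two remarks on where you diverge. First, the lower bound: you propose a pointwise-in-Fourier Duhamel argument, bounding $|\widehat{(a,u)}(\xi,t)-\widehat{(a_L,u_L)}(\xi,t)|$ by $\int_0^t\|(F,H)\|_{L^1}\,dt'$ via the matrix semigroup $e^{tA(\xi)}$ and deducing a pointwise lower bound on a shrinking ball. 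The paper instead splits $(a,u)=(a_L,u_L)+(a_N,u_N)$, gets the linear lower bound from the exact monotonicity identity $\frac{d}{dt}[e^{2|\xi|^{2\beta}t}|(\sqrt{\gamma}\widehat a_L,\widehat u_L)|^2]\ge 0$, and then controls $\|\Lambda^{s_1}(a_N,u_N)\|_{L^2}$ by rerunning the weighted $L^2$ energy scheme on the remainder system \eqref{443} — explicitly to avoid spectral analysis of the symbol. Your route is workable but requires you to actually verify $\|e^{tA(\xi)}\|\lesssim e^{-c|\xi|^{2\beta}t}$ uniformly for small $\xi$ (including the borderline $\beta=\frac12$), which is the spectral input the paper's energy method replaces; both routes share the implicit requirement that the accumulated nonlinear contribution be small relative to $c_0$. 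Second, a caveat on your recursion in $s_1$: the paper records that the cross-term energy does \emph{not} close at the intermediate level $H^{s-\beta}$, and the $\beta$-order rate \eqref{422} — which is genuinely needed as input for the top-order estimates, not merely obtainable by interpolation afterwards — is instead extracted by multiplying the dissipative inequality \eqref{421} by $(1+t)^{\frac{1}{\beta}+2}$ and invoking \eqref{101}. Your schematic ladder with a cross term at every level glosses over this obstruction; you would need to replace that rung by the paper's detour (or an equivalent one) for the argument to close.
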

\begin{rema}
  Noting $L^1\hookrightarrow\dot{B}_{2, \infty}^{-1}$, it follows that the above results still hold true when $\left(a_{0}, u_{0}\right)\in L^1$.
\end{rema}

\textbf{Organization of the paper}
The paper is organized as follows. In Section 2 we introduce some lemmas which will be used
in the sequel. In Section 3 we prove the global regularity for (\ref{eq1}).
In Section 4 we will prove optimal time decay rate for the (\ref{eq1}) by the improved Fourier splitting method and the time weighted
energy estimate.

\vskip .5in
\section{Preliminaries}

We agree that $f\lesssim g$ represents $f\leq Cg$ with a constant $C>0$. The symbol $\widehat{f}=\mathscr{F}(f)$ stands for the Fourier transform of $f$. Denote $\mathscr{F}^{-1}(f)$ the inverse Fourier transform of $f$.
 Let $A$, $B$ be two operators, we denote $[A, B] = AB - BA$, the commutator
between $A$ and $B$. Denote $\langle f,g\rangle$ the $L^2(\R^d)$ inner product of $f$ and $g$.  For $X$ a Banach space and $I$ an interval of $\mathbb{R}$, for any $f,g,h\in X$, we agree that
$\left\|\left(f,g\right)\right\| _{X}\stackrel{\mathrm{def}}{=} \left\|f\right\| _{X}
+\left\|g\right\|_{X}$
and denote by $C(I; X)$ the set of
continuous functions on $I$ with values in $X$.

	The Littlewood-Paley decomposition theory and Besov spaces are given as follows.
	\begin{lemm}\cite{Bahouri2011}\label{lemma1}
		Let $\mathscr{C}$ be the annulus $\{\xi\in\mathbb{R}^d:\frac 3 4\leq|\xi|\leq\frac 8 3\}$. There exists a radial functions $\varphi$, valued in the interval $[0,1]$, belonging to $\mathscr{D}(\mathscr{C})$, and such that
		$$ \forall\xi\in\mathbb{R}^2\backslash\{0\},\ \sum_{j\in\mathbb{Z}}\varphi(2^{-j}\xi)=1,~~~ $$
		$$ |j-j'|\geq 2\Rightarrow\mathrm{Supp}\ \varphi(2^{-j}\cdot)\cap \mathrm{Supp}\ \varphi(2^{-j'}\cdot)=\emptyset, $$
		Moreover, we have
		$$ \forall\xi\in\mathbb{R}^2\backslash\{0\},\ \frac 1 2\leq\sum_{j\in\mathbb{Z}}\varphi^2(2^{-j}\xi)\leq 1.~~ $$
	\end{lemm}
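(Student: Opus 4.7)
The plan is to construct $\varphi$ from a single smooth radial cutoff by a telescoping difference. First I would fix a smooth radial function $\chi:\mathbb{R}^{d}\to[0,1]$ with $\chi\equiv 1$ on $\{|\xi|\leq 3/4\}$ and $\mathrm{Supp}\,\chi\subset\{|\xi|\leq 4/3\}$, chosen to be non-increasing as a function of $|\xi|$; such a $\chi$ is built by composing a one-dimensional $C^{\infty}$ cutoff with the map $\xi\mapsto|\xi|$, or equivalently by mollifying the indicator of an intermediate ball. I would then define
$$
\varphi(\xi):=\chi(\xi/2)-\chi(\xi).
$$
Because $\chi(\xi)=1$ for $|\xi|\leq 3/4$ and $\chi(\xi/2)=0$ for $|\xi|\geq 8/3$, the function $\varphi$ is supported in the annulus $\mathscr{C}=\{3/4\leq|\xi|\leq 8/3\}$. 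Radial monotonicity of $\chi$ gives $\chi(\xi/2)\geq\chi(\xi)$ everywhere, whence $\varphi\in[0,1]$. Smoothness and radiality are inherited from $\chi$, so $\varphi\in\mathscr{D}(\mathscr{C})$ as required.

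For the partition-of-unity identity I would use the telescoping structure built into the definition: for any $N\in\mathbb{N}$,
$$
\sum_{j=-N}^{N}\varphi(2^{-j}\xi)=\sum_{j=-N}^{N}\bigl[\chi(2^{-j-1}\xi)-\chi(2^{-j}\xi)\bigr]=\chi(2^{-N-1}\xi)-\chi(2^{N}\xi).
$$
For fixed $\xi\neq 0$, as $N\to\infty$ the first term converges to $\chi(0)=1$ by continuity, while the second term vanishes once $2^{N}|\xi|\geq 4/3$. Only finitely many $j$ contribute at each $\xi\neq 0$ (by the support property proved next), so the sum is locally finite and equals $1$.

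For the almost-disjointness of supports, note $\mathrm{Supp}\,\varphi(2^{-j}\cdot)\subset\{3\cdot 2^{j-2}\leq|\xi|\leq 8\cdot 2^{j}/3\}$. If $j'\geq j+2$, the lower edge of the $j'$-support satisfies $3\cdot 2^{j'-2}\geq 3\cdot 2^{j}>8\cdot 2^{j}/3$, so the two supports are disjoint. Consequently, at any $\xi\neq 0$ at most two consecutive indices contribute to $\sum_{j}\varphi^{2}(2^{-j}\xi)$. Writing these two values as $a,b\in[0,1]$ with $a+b=1$, one has $a^{2}+b^{2}=1-2ab$, and since $ab\in[0,1/4]$ this quantity lies in $[1/2,1]$, with the lower bound attained at $a=b=1/2$ and the upper bound at $a\in\{0,1\}$. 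This yields simultaneously the two pointwise bounds on $\sum_{j}\varphi^{2}(2^{-j}\xi)$.

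The only ingredient that is not purely algebraic is the initial choice of the radial cutoff $\chi$ with the prescribed monotonicity and support, which is where I expect the (minor) technical care to lie; once $\chi$ is in hand, every remaining claim reduces to support bookkeeping, a telescoping sum, and the elementary identity $a^{2}+(1-a)^{2}=1-2a(1-a)$.
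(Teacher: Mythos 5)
Your construction is correct and complete. Note first that the paper itself gives no proof of this lemma: it is quoted verbatim (as Proposition 2.10) from the cited monograph of Bahouri--Chemin--Danchin, so there is no in-paper argument to compare against. Your telescoping construction $\varphi(\xi)=\chi(\xi/2)-\chi(\xi)$ is one of the two standard routes and every step checks out: the support bookkeeping gives $\mathrm{Supp}\,\varphi\subset\mathscr{C}$ and disjointness for $|j-j'|\ge 2$ since $8\cdot 2^{j}/3<3\cdot 2^{j}\le 3\cdot 2^{j'-2}$; the radial monotonicity of $\chi$ gives $\varphi\in[0,1]$; the telescoping sum plus local finiteness gives the partition of unity; and the elementary inequality $a^{2}+(1-a)^{2}\in[1/2,1]$ for $a\in[0,1]$, applied to the at most two consecutive nonzero terms, gives the final two-sided bound. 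The cited reference instead takes a bump $\theta\in\mathscr{D}(\mathscr{C})$ equal to $1$ on a smaller annulus whose dyadic dilates cover $\mathbb{R}^{d}\setminus\{0\}$ and normalizes, $\varphi=\theta/\sum_{j}\theta(2^{-j}\cdot)$, obtaining the lower bound $\sum_{j}\varphi^{2}\ge 1/2$ from Cauchy--Schwarz over the at most two nonzero terms. The two approaches are essentially equivalent in difficulty; yours makes the identity $\sum_{j}\varphi(2^{-j}\xi)=1$ immediate at the cost of having to insist on the radial monotonicity of $\chi$ to get $\varphi\ge 0$, while the normalization approach gets nonnegativity for free but must verify smoothness and the covering property of the denominator. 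Either is acceptable; there is no gap.
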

			
		Let $u$ be a tempered distribution in $\mathcal{S}_{h}'(\mathbb{R}^2)$. For all $j\in\mathbb{Z}$, define
		The homogeneous operators are defined by
		$$\dot{\Delta}_j u=\mathscr{F}^{-1}(\varphi(2^{-j}\cdot)\mathscr{F}u).$$
Then the Littlewood-Paley decomposition is given as follows:
$$
u=\sum_{j\in\mathbb{Z}}\dot{\Delta}_j u\quad  \mathrm{in}\quad  \mathcal{S}_{h}'(\mathbb{R}^2). 
$$		
		Let $s\in\mathbb{R}$ and $(p,r)\in[1,\infty]^2$. The homogeneous Besov space $\dot{B}^s_{p,r}$ is given as follows
	$$ \dot{B}^s_{p,r}=\left\{u\in \mathcal{S}_{h}'(\mathbb{R}^2):\|u\|_{\dot{B}^s_{p,r}}=\Big\|(2^{js}\|\dot{\Delta}_j u\|_{L^p})_j \Big\|_{l^r(\mathbb{Z})}<\infty\right\}.$$
 We introduce the Gagliardo-Nirenberg inequality of Sobolev type with
$d = 2$.
\begin{lemm}\label{lemma3}\cite{1959On}
For $d=2$, $p\in[2,+\infty)$ and $0\leq s,s_{1}\leq s_{2}$, there holds
\begin{align*}
\|\Lambda^{s}f\|_{L^{p}}\leq C\|\Lambda^{s_{1}}f\|_{L^{2}}^{1- \theta}\|\Lambda^{s_{2}}f\|_{L^{2}}^{\theta},
\end{align*}
where $0\leq\theta\leq 1$ and satisfies  
$$s+1-\frac{2}{p}=(1-\theta)s_{1}+\theta s_{2}.$$ 
Note that we also require that $0<\theta<1, 0\leq s_{1}\leq s$, when $ p=\infty$.
\end{lemm}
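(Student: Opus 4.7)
The plan is to prove this Gagliardo--Nirenberg inequality directly via the Littlewood--Paley decomposition supplied by Lemma \ref{lemma1}, combining Bernstein-type estimates with a frequency cutoff that is optimized at the end.

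First, I would write $f=\sum_{j\in\mathbb{Z}}\dot\Delta_j f$ in $\mathcal{S}_h'(\mathbb{R}^2)$, apply $\Lambda^s$, and use the triangle inequality together with the Bernstein inequality (in dimension $d=2$, for $p\ge 2$)
$$\|\dot\Delta_j\Lambda^s f\|_{L^p}\lesssim 2^{j(s+1-2/p)}\|\dot\Delta_j f\|_{L^2},$$
valid because each block is spectrally localized on the annulus $2^j\mathscr{C}$. I would also record the Bernstein-type identity $\|\dot\Delta_j f\|_{L^2}\lesssim 2^{-j\sigma}\|\Lambda^{\sigma}f\|_{L^2}$ for any real $\sigma$, which I apply with $\sigma=s_1$ for the low-frequency blocks and $\sigma=s_2$ for the high-frequency blocks.

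Second, splitting the sum at a level $N\in\mathbb{Z}$ to be chosen, I obtain
$$\|\Lambda^s f\|_{L^p}\lesssim\Bigl(\sum_{j\le N}2^{j(s+1-2/p-s_1)}\Bigr)\|\Lambda^{s_1}f\|_{L^2}+\Bigl(\sum_{j>N}2^{j(s+1-2/p-s_2)}\Bigr)\|\Lambda^{s_2}f\|_{L^2}.$$
The scaling hypothesis $s+1-2/p=(1-\theta)s_1+\theta s_2$ rearranges to $s+1-2/p-s_1=\theta(s_2-s_1)\ge 0$ and $s+1-2/p-s_2=-(1-\theta)(s_2-s_1)\le 0$, so under $0<\theta<1$ and $s_1<s_2$ each geometric series is controlled by its endpoint term, yielding
$$\|\Lambda^s f\|_{L^p}\lesssim 2^{N\theta(s_2-s_1)}\|\Lambda^{s_1}f\|_{L^2}+2^{-N(1-\theta)(s_2-s_1)}\|\Lambda^{s_2}f\|_{L^2}.$$

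Third, I would optimize by choosing $N$ so the two terms balance, namely $2^{N(s_2-s_1)}=\|\Lambda^{s_2}f\|_{L^2}/\|\Lambda^{s_1}f\|_{L^2}$; rounding to an integer costs only a multiplicative constant. Substituting back produces the claimed bound $\|\Lambda^s f\|_{L^p}\lesssim\|\Lambda^{s_1}f\|_{L^2}^{1-\theta}\|\Lambda^{s_2}f\|_{L^2}^{\theta}$. The degenerate cases $\theta\in\{0,1\}$ or $s_1=s_2$ collapse to a single endpoint Sobolev embedding handled by summing Bernstein directly, and the assumption $s_1\ge 0$ ensures no polynomial obstruction to the recovery of $f$ from its blocks in $\mathcal{S}_h'$.

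The step I expect to be most delicate is the endpoint behavior flagged in the lemma's final sentence: when $p=\infty$ the Bernstein inequality still gives the right powers of $2^j$, but the low-frequency geometric series only converges under the strict inequality $\theta>0$, reflecting the well-known failure of $\dot H^1\hookrightarrow L^\infty$ in dimension two. This is exactly why the statement enforces $0<\theta<1$ and $s_1\le s$ in that regime, and once these strict conditions are imposed the argument above goes through without modification.
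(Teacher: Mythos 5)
The paper offers no proof of this lemma at all; it is quoted directly from Nirenberg's 1959 paper, so there is no in-house argument to compare against. Your Littlewood--Paley proof of the main case $0<\theta<1$, $s_1<s_2$ is correct and is the standard modern route: Bernstein on each block, the low/high frequency splitting, and the optimization of the cutoff $N$ all check out, and the exponent bookkeeping $s+1-\frac{2}{p}-s_1=\theta(s_2-s_1)$, $s+1-\frac{2}{p}-s_2=-(1-\theta)(s_2-s_1)$ is exactly right, as is your diagnosis of why $p=\infty$ forces $\theta>0$.

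There is, however, a genuine gap in your dismissal of the degenerate cases, which the statement includes (it allows $0\le\theta\le 1$ for $p<\infty$) and which the paper actually invokes: for instance the bound $\|\Lambda^{s}u\|_{L^{2/(1-\beta)}}\lesssim\|\Lambda^{s+\beta}u\|_{L^{2}}$ used after (\ref{304}) is precisely the case $\theta=1$. When $\theta\in\{0,1\}$ or $s_1=s_2$, the relevant geometric series has ratio $2^{0}=1$ and diverges, so these cases are emphatically \emph{not} ``handled by summing Bernstein directly''; what is needed is the homogeneous Sobolev embedding $\dot H^{1-2/p}(\mathbb{R}^2)\hookrightarrow L^{p}(\mathbb{R}^2)$ for $2\le p<\infty$, whose Littlewood--Paley proof runs through the square-function characterization of $L^{p}$ --- the chain $\dot H^{\sigma}=\dot B^{\sigma}_{2,2}\hookrightarrow\dot B^{0}_{p,2}\hookrightarrow L^{p}$ with $\sigma=1-\frac{2}{p}$, where the last inclusion uses Minkowski's inequality in $\ell^{2}$ rather than the triangle inequality in $\ell^{1}$ --- or alternatively a level-set/weak-type argument. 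The endpoint statement is of course true, so the lemma stands, but your proof as written does not cover it; either supply the $\ell^{2}$-based embedding for the boundary cases or restrict the block-summation argument explicitly to $0<\theta<1$ and quote the Sobolev embedding separately.
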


The following commutator estimate and product estimate are useful to estimate convective terms.
\begin{lemm}\label{lemma4}\cite{kato1}
Assume that $s>0$, $p, p_1, p_4 \in (1,\infty)$ and $\frac{1}{p}=\frac{1}{p_1}+\frac{1}{p_2}=\frac{1}{p_3}+\frac{1}{p_4}$, then we obtain
\begin{align*}
\|[\Lambda^{s},f]g\|_{L^{p}}\leq C\left(\|\Lambda^{s}f\|_{L^{p_{1}}}\|g\|_{L^{p_{2} }}+\|\nabla f\|_{L^{p_{3}}}\|\Lambda^{s-1}g\|_{L^{p_{4}}}\right).
\end{align*}
\end{lemm}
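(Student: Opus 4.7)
The plan is to rely on Bony's paraproduct calculus, whose building blocks (the homogeneous Littlewood-Paley projectors $\dot{\Delta}_j$) are already available from Lemma \ref{lemma1}; set $S_{j-1} = \sum_{j' \le j-2} \dot{\Delta}_{j'}$. I would decompose the product as $fg = T_f g + T_g f + R(f,g)$, where $T_f g = \sum_j S_{j-1} f \, \dot{\Delta}_j g$ and $R(f,g) = \sum_{|j-j'|\le 1} \dot{\Delta}_j f \, \dot{\Delta}_{j'} g$, apply the same decomposition to $f \Lambda^s g$, and subtract to obtain
\begin{align*}
[\Lambda^s, f]g = \bigl[\Lambda^s, T_f\bigr]g \;+\; \bigl(\Lambda^s T_g f - T_{\Lambda^s g} f\bigr) \;+\; \bigl(\Lambda^s R(f,g) - R(f, \Lambda^s g)\bigr),
\end{align*}
which I will call (I), (II), (III). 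The scheme is: (I) produces the $\|\nabla f\|_{L^{p_3}}\|\Lambda^{s-1} g\|_{L^{p_4}}$ piece, while (II) and (III) produce the $\|\Lambda^s f\|_{L^{p_1}}\|g\|_{L^{p_2}}$ piece.

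For (I), each summand $[\Lambda^s, S_{j-1}f]\dot{\Delta}_j g$ is spectrally localized on $|\xi|\sim 2^j$. Writing the commutator as a bilinear Fourier multiplier with symbol $|\xi|^s - |\xi-\eta|^s$ and using a first-order Taylor expansion in $\eta$ (the frequency carried by $S_{j-1}f$, which is $\lesssim 2^j$) yields the kernel bound
\begin{align*}
\bigl\|[\Lambda^s, S_{j-1}f]\dot{\Delta}_j g\bigr\|_{L^p} \;\lesssim\; 2^{j(s-1)} \, \|\nabla S_{j-1} f\|_{L^{p_3}} \, \|\dot{\Delta}_j g\|_{L^{p_4}}.
\end{align*}
Combining with the Littlewood-Paley square-function characterization of $L^p$ ($p \in (1,\infty)$) and the vector-valued Hardy-Littlewood maximal inequality, one sums in $j$ to get $\|\text{(I)}\|_{L^p} \lesssim \|\nabla f\|_{L^{p_3}}\|\Lambda^{s-1} g\|_{L^{p_4}}$.

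For (II), each summand of $\Lambda^s T_g f = \sum_j \Lambda^s(S_{j-1}g\,\dot{\Delta}_j f)$ is again spectrally localized at $2^j$, so $\Lambda^s$ contributes a factor $\sim 2^{js}$; the square-function characterization of $\|\Lambda^s f\|_{L^{p_1}}$ together with a maximal function bound on $S_{j-1}g$ yields control by $\|g\|_{L^{p_2}}\|\Lambda^s f\|_{L^{p_1}}$, and the companion term $T_{\Lambda^s g} f$ is estimated identically. The genuinely delicate piece is (III): in the high-high interaction $\dot{\Delta}_j f \cdot \dot{\Delta}_{j'} g$ the output frequency can be arbitrarily small, so one cannot simply project on $\dot{\Delta}_k$ and reduce to $j=k$. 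I would localize an output block $\dot{\Delta}_k$, restrict the sum to $j \ge k - N_0$, and apply Bernstein's inequality to trade spatial integrability for the scaling factor $2^{2(j-k)}$, then distribute the power $2^{js}$ between $\dot{\Delta}_j f$ (to reconstruct $\|\Lambda^s f\|_{L^{p_1}}$) and a convergent geometric factor $2^{-(j-k)s}$ — this is exactly where the hypothesis $s>0$ is essential. A parallel computation handles $R(f,\Lambda^s g)$.

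The main obstacle is step (III): ensuring that the high-high-to-low cascade sums with the correct Hölder pairing $(p_1,p_2)$ without losing a derivative, which forces the use of Bernstein plus the vector-valued maximal inequality in tandem rather than either one alone. Once all three estimates are assembled, adding the two outcomes (one invoking $(p_1,p_2)$, the other $(p_3,p_4)$) yields the stated inequality.
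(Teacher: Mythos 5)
The paper does not prove this lemma at all: it is quoted verbatim from the cited reference \cite{kato1} as a known Kato--Ponce-type commutator estimate, so there is no in-paper argument to compare against. Judged on its own, your paraproduct sketch is the standard modern route to this inequality and its architecture is sound: the split $[\Lambda^s,f]g=[\Lambda^s,T_f]g+(\Lambda^sT_gf-T_{\Lambda^sg}f)+(\Lambda^sR(f,g)-R(f,\Lambda^sg))$ is exactly the right decomposition, the frequency localizations you assert are correct, and your identification of where $s>0$ enters the remainder term is accurate. Two points deserve more care before this counts as a complete proof. First, the key bound $\|[\Lambda^s,S_{j-1}f]\dot\Delta_jg\|_{L^p}\lesssim 2^{j(s-1)}\|\nabla S_{j-1}f\|_{L^{p_3}}\|\dot\Delta_jg\|_{L^{p_4}}$ is easy when $p_3=\infty$ (kernel representation plus the mean value theorem), but for a general H\"older pairing $(p_3,p_4)$ with $p_3<\infty$ you cannot pull $\nabla f$ out in sup norm; you need either a Coifman--Meyer bilinear multiplier theorem applied to the symbol $(|\xi|^s-|\xi-\eta|^s)/|\eta|$ on the relevant annuli, or the pointwise bound $|f(y)-f(x)|\le|x-y|\int_0^1|\nabla f(x+t(y-x))|\,dt$ fed into a maximal-function argument. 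Second, $s>0$ is needed not only in (III) but also in the companion term of (II), where $|S_{j-1}\Lambda^sg(x)|\le\sum_{j'\le j-2}2^{j's}Mg(x)\lesssim 2^{js}Mg(x)$ requires the geometric sum to converge; your claim that this term is estimated ``identically'' glosses over this. Neither issue is a gap in the strategy, only in the level of detail, and both are handled in the literature you would be reconstructing.
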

\begin{lemm}\label{lemma5}\cite{kato1}
Assume that $s>0$, $p, p_2, p_4 \in (1,\infty)$ and $\frac{1}{p}=\frac{1}{p_1}+\frac{1}{p_2}=\frac{1}{p_3}+\frac{1}{p_4}$, then we obtain
\begin{align*}
\|\Lambda^{s}\left(fg\right)\|_{L^{p}} \leq C\left(\|f \|_{L^{p_{1}}}\|\Lambda^{s}g\|_{L^{p_{2}}} + \|g\|_{L^{p_ {3}}}\|\Lambda^{s}f\|_{L^{p_{4}}}\right).
\end{align*}
\end{lemm}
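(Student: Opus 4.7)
The plan is to establish this fractional Leibniz rule via Bony's paraproduct decomposition combined with the Littlewood--Paley machinery recalled in Lemma \ref{lemma1}. Setting $S_{j-1}:=\sum_{k\le j-2}\dot\Delta_k$, I would write
\[
fg = T_f g + T_g f + R(f,g),\qquad T_f g := \sum_{j\in\mathbb{Z}} S_{j-1}f\,\dot\Delta_j g,\qquad R(f,g):=\sum_{|j-j'|\le 1}\dot\Delta_j f\,\dot\Delta_{j'}g,
\]
and bound $\Lambda^s$ applied to each of the three pieces in $L^p$. The paraproduct $T_f g$ will produce the first term on the right-hand side, $T_g f$ the second, and the remainder $R(f,g)$ can be absorbed into either.

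For $T_f g$, the Fourier support of the $j$-th summand lies in an annulus of radius $\sim 2^j$, so the H\"ormander--Mikhlin multiplier theorem (applicable since $p\in(1,\infty)$) yields a pointwise bound $\|\dot\Delta_k \Lambda^s(S_{j-1}f\,\dot\Delta_j g)\|_{L^p}\lesssim 2^{ks}\|S_{j-1}f\,\dot\Delta_j g\|_{L^p}$ which is nonzero only for $k\approx j$. H\"older with $\tfrac{1}{p}=\tfrac{1}{p_1}+\tfrac{1}{p_2}$ together with the uniform estimate $\|S_{j-1}f\|_{L^{p_1}}\lesssim\|f\|_{L^{p_1}}$ reduces matters to controlling the $\ell^2$-valued sequence $(2^{js}\dot\Delta_j g)_j$, for which the Littlewood--Paley square-function characterization of $L^{p_2}$ together with the Fefferman--Stein vector-valued maximal inequality (requiring $p_2\in(1,\infty)$) gives
\[
\|\Lambda^s T_f g\|_{L^p}\lesssim \|f\|_{L^{p_1}}\|\Lambda^s g\|_{L^{p_2}}.
\]
A symmetric computation with the exponents $p_3,p_4$ handles $T_g f$ and yields the second term on the right-hand side.

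The main technical obstacle is the remainder $R(f,g)$, because the Fourier support of each summand $\dot\Delta_j f\,\tilde{\dot\Delta}_j g$ is only a \emph{ball} of radius $\sim 2^j$ rather than an annulus, so projecting onto the $k$-th block with $k\ll j$ enjoys no automatic frequency cancellation. To handle this, I would write $\dot\Delta_k\Lambda^s R(f,g)=\sum_{j\ge k-N}\dot\Delta_k\Lambda^s(\dot\Delta_j f\,\tilde{\dot\Delta}_j g)$ for a fixed $N$, apply Bernstein to obtain $\|\dot\Delta_k\Lambda^s(\dot\Delta_j f\,\tilde{\dot\Delta}_j g)\|_{L^p}\lesssim 2^{ks}\|\dot\Delta_j f\|_{L^{p_4}}\|\tilde{\dot\Delta}_j g\|_{L^{p_2}}$, and then sum in $j\ge k-N$. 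The geometric series $\sum_{j\ge k}2^{(k-j)s}$ converges precisely because $s>0$, which is the one point where the hypothesis is decisive; redistributing the factor $\Lambda^s$ onto whichever block is convenient yields either admissible form of the right-hand side, and the square-function characterization again converts the dyadic sums into $\|f\|_{L^{p_1}}\|\Lambda^s g\|_{L^{p_2}}$ or $\|\Lambda^s f\|_{L^{p_4}}\|g\|_{L^{p_3}}$. Adding the three estimates produces the claimed inequality, with the conditions $p,p_2,p_4\in(1,\infty)$ being invoked precisely in the Mikhlin and Fefferman--Stein steps.
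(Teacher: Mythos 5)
The paper does not prove this lemma at all: it is quoted verbatim from the cited reference (Kato--Ponce), whose original argument runs through commutator estimates and Coifman--Meyer multiplier theory rather than through paraproducts. Your Bony-decomposition proof is therefore a genuinely different (and by now the standard textbook) route, and in outline it is correct: the two paraproducts give the two terms on the right-hand side via the pointwise bound $|S_{j-1}f|\lesssim Mf$, the square-function characterization of $\|\Lambda^s\cdot\|_{L^{p_2}}$ (resp. $L^{p_4}$), and Fefferman--Stein, which is exactly where the hypotheses $p,p_2,p_4\in(1,\infty)$ enter while still permitting $p_1,p_3=\infty$; and the remainder is summable over $j\ge k-N$ precisely because $s>0$. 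One step needs correcting: in your Bernstein/H\"older bound for the remainder you pair the exponents as $\|\dot{\Delta}_j f\|_{L^{p_4}}\|\tilde{\dot{\Delta}}_j g\|_{L^{p_2}}$, but $\tfrac1{p_4}+\tfrac1{p_2}$ is in general not $\tfrac1p$; you must use one consistent conjugate pair, e.g.\ place the weight $2^{js}$ on $\dot{\Delta}_j f$ and estimate $\bigl(\sum_j|M(2^{js}\dot{\Delta}_j f\,\tilde{\dot{\Delta}}_j g)|^2\bigr)^{1/2}$ by $\|g\|_{L^{p_3}}\|\Lambda^s f\|_{L^{p_4}}$ (or symmetrically with $(p_1,p_2)$). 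Relatedly, be explicit that the remainder must be closed with the vector-valued maximal inequality and the square function, not with a scalar Besov-type summation, since $\dot{B}^s_{p,2}$ and the Sobolev space $\Lambda^{-s}L^p$ only coincide at $p=2$. With that pairing fixed, your argument is complete and in fact slightly more general than needed here, where the lemma is only ever applied with $p=2$.
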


\vskip .1in

\section{The global regularity to the global small solution to (\ref{eq1})}
In this section, we shall prove the critical global regularity for (\ref{eq1}). We set the following energy and dissipation functionals for $(a,u)$:
\begin{align*}
E_{0}&=\left\|(\sqrt{\gamma} a, u)\right\|_{H^{s}}^{2}+2k\langle\Lambda^{\beta-1}\nabla a,\Lambda^{\beta-1}u\rangle_{H^{s-2\beta+1}},\\
D_{0}&=k\gamma\|\Lambda^{\beta}a\|_{H^{s+1-2\beta}}^{2}+\|\Lambda^{\beta} u\|_{H^{s}}^{2},
\end{align*}
where $k$ is a sufficiently small constant.

\textbf{Proof of Theorem \ref{1theo1}:}

We assume that $(a,u)$ be a local strong solution of (\ref{eq1}), Before proceeding any further, we assume a
priori that
\begin{align*}
\|(a,u)\|_{H^{{s}}}\le \delta.
\end{align*}
Taking the $L^2$ inner product of the first two equations of (\ref{eq1}) with $(\gamma a,u)$ and integrating by parts, we get
\begin{align}\label{301}
\frac{\gamma }{2}\frac{d}{dt}\|a\|_{L^2}^{2}+\gamma\langle\mathrm{div}u,a\rangle=&-\gamma\langle\mathrm{div}(au),a\rangle\\\nonumber
\lesssim &\|\nabla a\|_{L^2}\|a\|_{L^4}\|u\|_{L^4}\\\nonumber
\lesssim &\|\nabla a\|_{L^2}\|a\|_{L^2}^{\frac{1}{2}}\|u\|_{L^2}^{\frac{1}{2}}\|\nabla u\|_{L^2}^{\frac{1}{2}}\|\nabla a\|_{L^2}^{\frac{1}{2}}\\\nonumber
\lesssim &\|\nabla a\|_{L^2}^{2}\|a\|_{L^2}+\|u\|_{L^2}\|\nabla u\|_{L^2}\|\nabla a\|_{L^2},
 \end{align}
 and
\begin{align}\label{302}
&\frac{1}{2}\frac{d}{dt}\|u\|_{L^2}^{2}+\gamma\langle\nabla a,u\rangle+\|\Lambda^{\beta}u\|_{L^2}^2\\\nonumber
=&\langle k(a)\nabla a,u\rangle-\langle u\cdot\nabla
 u,u\rangle+\left\langle\frac{a}{1+a}(-\Delta)^{\beta}u,u\right\rangle\\\nonumber
  \lesssim& \|\nabla a\|_{L^2}^{2}\|a\|_{L^2}+\|u\|_{L^2}\|\nabla u\|_{L^2}\|\nabla a\|_{L^2}+\|\nabla u\|_{L^2}^{2}\|u\|_{L^2}\\\nonumber
  &+\|\Lambda^{2\beta} u\|_{L^2}\|\nabla u\|_{L^2}\|u\|_{L^2}+\|\Lambda^{2\beta} u\|_{L^2}\|\nabla a\|_{L^2}\|a\|_{L^2}.
\end{align}
Combining (\ref{301}) and (\ref{302}), we have
\begin{align}\label{303}
\frac{1}{2}\frac{d}{dt}\|(\sqrt{\gamma} a,u)\|_{L^2}^{2}+\|\Lambda^{\beta}u\|_{L^2}^2+\|\mathrm{div}u\|_{L^2}^2\lesssim \delta D_0.
\end{align}
Applying $\Lambda^{s}$ to (\ref{eq1}), and taking the $L^2$ inner product of the first two equations of (\ref{eq1}) with $(\gamma\Lambda^{s}a,\Lambda^{s} u)$ and integrating by parts, by Lemma \ref{lemma4} and \ref{lemma5}, we obtain
\begin{align}\label{304}
&\frac{1}{2}\frac{d}{dt}\|\sqrt{\gamma} \Lambda^{s}a\|_{L^2}^{2}+\gamma\langle\Lambda^{s}\mathrm{div}u,\Lambda^{s}a\rangle\\\nonumber
=&-\gamma\langle\Lambda^{s}\mathrm{div}(au),\Lambda^{s}a\rangle\\\nonumber
=&-\gamma\langle[\Lambda^{s},u\cdot\nabla]a,\Lambda^{s}a\rangle-\gamma\langle u\cdot\nabla \Lambda^{s}a,\Lambda^{s}a\rangle-\gamma\langle\Lambda^{s}(a\mathrm{div}u),\Lambda^{s}a\rangle\\\nonumber
=&-\gamma\langle[\Lambda^{s},u\cdot\nabla]a,\Lambda^{s}a\rangle+\frac{\gamma}{2}\langle \mathrm{div}u, |\Lambda^{s}a|^{2}\rangle-\gamma\langle\Lambda^{s+\beta-1}(a\mathrm{div}u),\Lambda^{s-\beta+1}a\rangle\\\nonumber
\lesssim &\|\Lambda^{s} u\|_{L^{\frac{2}{1-\beta}}}\|\Lambda^{s} a\|_{L^{\frac{2}{\beta}}}\|\nabla a\|_{L^{2}}+\|\Lambda^{s} a\|_{L^{\frac{2}{\beta}}}\|\Lambda^{s} a\|_{L^{2}}\|\nabla u\|_{L^{\frac{2}{1-\beta}}}\\\nonumber
&+\|\Lambda^{s+\beta}u \|_{L^{2}}\|\Lambda^{s-\beta+1} a\|_{L^{2}}\|a\|_{L^{\infty}}+\|\mathrm{div}u \|_{L^{\frac{2}{1-\beta}}}\|\Lambda^{s-\beta+1} a\|_{L^{2}}\|\Lambda^{s+\beta-1}a\|_{L^{\frac{2}{\beta}}}\\\nonumber
\lesssim &\|\Lambda^{s+\beta} u\|_{L^{2}}\|\Lambda^{s-\beta+1} a\|_{L^{2}}\|\nabla a\|_{L^{2}}+\|\Lambda^{s-\beta+1} a\|_{L^{2}}\|\Lambda^{s} a\|_{L^{2}}\|\Lambda^{1+\beta}u\|_{L^{2}}\\\nonumber
&+\|\Lambda^{s+\beta}u \|_{L^{2}}\|\Lambda^{s-\beta+1} a\|_{L^{2}}\|a\|_{L^{\infty}}.
 \end{align}
 Along the same line, we have
\begin{align*}
&\frac{1}{2}\frac{d}{dt}\|\Lambda^{s}u\|_{L^2}^{2}+\gamma\langle\Lambda^{s}\nabla a,\Lambda^{s}u\rangle+\|\Lambda^{s+\beta}u\|_{L^2}^2\\
=&\langle \Lambda^{s} (k(a)\nabla a),\Lambda^{s}u\rangle-\langle \Lambda^{s}(u\cdot\nabla
 u),\Lambda^{s}u\rangle+\left\langle\Lambda^{s}\left(\frac{a}{1+a}(-\Delta)^{\beta}u\right),\Lambda^{s} u\right\rangle\\
 \triangleq& \sum_{i=1}^{3}I_{i}.
\end{align*}
Similar to the derivation of (\ref{304}), due to Lemma \ref{lemma5}, we have
\begin{align*}
I_1\lesssim& \|\Lambda^{s+\beta}u\|_{L^2}(\|\Lambda^{s-\beta+1}a\|_{L^2}\|k(a)\|_{L^\infty}+\|\Lambda^{s-\beta}k(a)\|_{L^{\frac{2}{1-\beta}}}\|\nabla a\|_{L^{\frac{2}{\beta}}})\\
\lesssim& \|\Lambda^{s+\beta}u\|_{L^2}(\|\Lambda^{s-\beta+1}a\|_{L^2}\|k(a)\|_{L^\infty}+\|\Lambda^{s}k(a)\|_{L^{2}}\|\Lambda^{2-\beta} a\|_{L^{2}})\\
\lesssim& \|\Lambda^{s+\beta}u\|_{L^2}(\|\Lambda^{s-\beta+1}a\|_{L^2}\|a\|_{L^\infty}+\|\Lambda^{s}a\|_{L^{2}}\|\Lambda^{2-\beta} a\|_{L^{2}}).
\end{align*} 
and
\begin{align*}
I_2\lesssim& \|\Lambda^{s+\beta}u\|_{L^2}\|\Lambda^{s-\beta+1}u\|_{L^2}\|\nabla u\|_{L^2}.
\end{align*}
Moreover, by Lemma \ref{lemma5}, we obtain
\begin{align*}
I_3\lesssim&\|\Lambda^{s+\beta}u\|_{L^2}(\|\Lambda^{s-\beta}a\|_{L^{\frac{2}{1-\beta}}}\|\Lambda^{2\beta}u\|_{L^{\frac{2}{\beta}}}+\|\Lambda^{s+\beta}u\|_{L^{2}}\|a \|_{L^{\infty}})\\
\lesssim&\|\Lambda^{s+\beta}u\|_{L^2}(\|\Lambda^{s}a\|_{L^{2}}\|\Lambda^{1+\beta}u\|_{L^{2}}+\|\Lambda^{s+\beta}u\|_{L^{2}}\|a \|_{L^{\infty}}).
\end{align*}
Combining the estimates from $I_1$ to $I_3$, we have
\begin{align}\label{305}
\frac{1}{2}\frac{d}{dt}\|\Lambda^{s}u\|_{L^2}^{2}+\gamma\langle\Lambda^{s}\nabla a,\Lambda^{s}u\rangle+\|\Lambda^{s+\beta}u\|_{L^2}^2\lesssim&\ \delta D_0.
\end{align}

Now we use the inner product between $a$ and $u$ to generate the dissipation of $a$. we choose $k>0$, which will be determined later. 
A simple calculation assures that
\begin{align}\label{306}
&\frac{d}{dt}\langle u,k\Lambda^{-2+2 \beta} \nabla a\rangle+k\gamma\|\Lambda^{\beta} a\|_{L^{2}}^{2} \\ \nonumber
=&k\langle\mathrm{div}\Lambda^{2\beta-2}u,\mathrm{div}u\rangle+k\langle\mathrm{div}\Lambda^{2\beta-2}u,\mathrm{div}au\rangle-k\langle(-\Delta)^{\beta}u,\Lambda^{2\beta-2}\nabla a\rangle\\\nonumber
&+k\langle k(a)\nabla a,\Lambda^{2\beta-2}\nabla a\rangle+k\left\langle-u\cdot\nabla u+\frac{a}{1+a}(-\Delta)^{\beta}u,\Lambda^{2\beta-2}\nabla a\right\rangle\\\nonumber
\le& k\|\Lambda^{\beta}u\|_{L^{2}}^{2}+Ck\|\Lambda^{2\beta-1}u\|_{L^{\frac{2}{\beta}}}(\|\nabla u\|_{L^2}\|a\|_{\frac{2}{1-\beta}}+\|\nabla a\|_{L^2}\|u\|_{\frac{2}{1-\beta}})+k\|\Lambda^{3\beta-1}u\|_{L^{2}}\|\Lambda^{\beta}a\|_{L^{2}}\\\nonumber
&+Ck\|\Lambda^{2\beta-1}a\|_{L^{\frac{2}{\beta}}}(\|\nabla a\|_{L^2}\|a\|_{\frac{2}{1-\beta}}+\|\nabla u\|_{L^2}\|u\|_{\frac{2}{1-\beta}})+Ck\|\Lambda^{2\beta-1}a\|_{L^{\frac{2}{\beta}}}\|\Lambda^{2\beta} u\|_{L^2}\|a\|_{\frac{2}{1-\beta}}\\\nonumber
\le & k\|\Lambda^{\beta}u\|_{L^{2}}^{2}+Ck\|\Lambda^{\beta}u\|_{L^{2}}(\|\nabla u\|_{L^2}\|\Lambda^{\beta}a\|_{2}+C\|\nabla a\|_{L^2}\|\Lambda^{\beta}u\|_{2})+k\|\Lambda^{3\beta-1}u\|_{L^{2}}\|\Lambda^{\beta}a\|_{L^{2}}\\\nonumber
&+Ck\|\Lambda^{\beta}a\|_{L^{2}}^{2}\|\nabla a\|_{L^2}+Ck\|\Lambda^{\beta}a\|_{L^{2}}\|\Lambda^{2\beta} u\|_{L^2}\|\Lambda^{\beta}a\|_{2}\\\nonumber
\le &C\delta D_0+\frac{k}{100}\|\Lambda^{\beta}a\|_{L^2}^{2}+Ck(\|\Lambda^{3\beta-1}u\|_{L^2}^{2}+\|\Lambda^{\beta}u\|_{L^2}^{2}).
\end{align}
Along the same line, we find
\begin{align}\label{307}
&\frac{d}{dt}\langle\Lambda^{s-\beta} \nabla a,k\Lambda^{s-\beta}u\rangle+k\gamma\|\Lambda^{s-\beta}\nabla a\|_{L^2}^2\\\nonumber
=& k\langle\Lambda^{s-\beta}\mathrm{div}u,\Lambda^{s-\beta}\mathrm{div}u\rangle+k\langle\Lambda^{s-\beta}\mathrm{div}au,\Lambda^{s-\beta}\mathrm{div}u\rangle-k\langle\Lambda^{s-\beta}(-\Delta)^{\beta}u,\Lambda^{s-\beta}\nabla a\rangle\\\nonumber
&+k\langle \Lambda^{s-\beta}(K(a)\nabla a),\Lambda^{s-\beta}\nabla a\rangle-k\langle \Lambda^{s-\beta}(u\cdot\nabla u),\Lambda^{s-\beta}\nabla a\rangle\\\nonumber
&+k\left\langle \Lambda^{s-\beta}\frac{a}{1+a}(-\Delta)^{\beta}u,\Lambda^{s-\beta}\nabla a\right\rangle\\\nonumber
=&\sum_{i=4}^{9}I_{i}\\\nonumber
\end{align}
Then we obtain
\begin{align*}
  I_4\lesssim k\|\Lambda^{s-\beta}\mathrm{div}u\|_{L^2}^2.
\end{align*}
By Lemma \ref{lemma5}, we have
\begin{align*}
I_5\lesssim k\|\Lambda^{s-\beta+1}u\|_{L^{2}}(\|u\|_{L^{\infty}}\|\Lambda^{s-\beta+1}a\|_{L^{2}}+\|a\|_{L^{\infty}}\|\Lambda^{s-\beta+1}u\|_{L^{2}}).
\end{align*}
Using H\"older's inequality, we get
\begin{align*}
  I_6\le k\|\Lambda^{s+\beta}u\|_{L^2}\|\Lambda^{s-\beta+1}a\|_{L^2}.
\end{align*}
Thanks to Lemma \ref{lemma5}, which gives rise to
\begin{align*}
  I_7 &\lesssim k\|\Lambda^{s-\beta+1}a\|_{L^{2}}(\|a\|_{L^{\infty}}\|\Lambda^{s-\beta+1}a\|_{L^{2}}+\|\nabla a\|_{L^{\frac{2}{\beta}}}\|\Lambda^{s-\beta}a\|_{L^{\frac{2}{1-\beta}}})\\
  &\lesssim k\|\Lambda^{s-\beta+1}a\|_{L^{2}}(\|a\|_{L^{\infty}}\|\Lambda^{s-\beta+1}a\|_{L^{2}}+\|\Lambda^{2-\beta} a\|_{L^{2}}\|\Lambda^{s}a\|_{L^{2}}),
\end{align*}
and
\begin{align*}
 I_8 &\lesssim k\|\Lambda^{s-\beta+1}a\|_{L^{2}}(\|u\|_{L^{\infty}}\|\Lambda^{s-\beta+1}u\|_{L^{2}}+\|\nabla u\|_{L^{\frac{2}{\beta}}}\|\Lambda^{s-\beta}u\|_{L^{\frac{2}{1-\beta}}})\\
 &\lesssim k\|\Lambda^{s-\beta+1}a\|_{L^{2}}(\|u\|_{L^{\infty}}\|\Lambda^{s-\beta+1}u\|_{L^{2}}+\|\Lambda^{2-\beta} u\|_{L^{2}}\|\Lambda^{s}u\|_{L^{2}}),
\end{align*}
and
\begin{align*}
I_9 &\lesssim k\|\Lambda^{s-\beta+1}a\|_{L^{2}}(\|a\|_{L^{\infty}}\|\Lambda^{s+\beta}u\|_{L^{2}}+\|\Lambda^{2\beta} u\|_{L^{\frac{2}{\beta}}}\|\Lambda^{s-\beta}a\|_{L^{\frac{2}{1-\beta}}})\\
&\lesssim k\|\Lambda^{s-\beta+1}a\|_{L^{2}}(\|a\|_{L^{\infty}}\|\Lambda^{s+\beta}u\|_{L^{2}}+\|\Lambda^{1+\beta} u\|_{L^{2}}\|\Lambda^{s}a\|_{L^{2}}).
\end{align*}
By inserting the estimates from $I_4$ to $I_9$ into (\ref{307}), we find
\begin{align}\label{308}
&\frac{d}{dt}\langle\Lambda^{s-\beta} \nabla a,k\Lambda^{s-\beta}u\rangle+k\gamma\|\Lambda^{s-\beta}\nabla a\|_{L^2}^2\\\nonumber
\le\ &C\delta D_{0}+Ck\|\Lambda^{\beta}u\|_{H^{s}}^{2}+\frac{k}{100}\|\Lambda^{s-\beta+1}a\|_{L^2}.
\end{align}
Combining (\ref{303}), (\ref{305}), (\ref{306}) and (\ref{308}), we conclude that for any $k>0$
\begin{align*}
&\frac{1}{2}\frac{d}{dt}\left(\|(\sqrt{\gamma} a,u)\|_{H^{s}}^{2}+2k\langle\Lambda^{\beta-1}\nabla a,\Lambda^{\beta-1}u\rangle_{H^{s-2\beta+1}}\right)\\
&+\left(1-Ck\right)\|\Lambda^{\beta}u\|_{H^{s}}^{2}+\left(k\gamma-\frac{k}{100}\right)\|\Lambda^{\beta}u\|_{H^{s-2\beta+1}}^{2}\\
&\lesssim  \delta D_0.
\end{align*}
Choosing $k$ small enough, we obtain
\begin{align}\label{309}
\frac{d}{dt}E_0+D_0\le 0.
\end{align}
By $\frac{1}{2}\le\beta<1$, we have 
\begin{align*}
2k\langle\Lambda^{\beta-1}\nabla a,\Lambda^{\beta-1}u\rangle_{H^{s-2\beta+1}}\le &\ 2k\|a\|_{H^{s}}\|u\|_{H^{s+1-2\beta}}\\ \nonumber
\le &\ 2k\|a\|_{H^{s}}\|u\|_{H^{{+s}}}\\ \nonumber
\le &\ \frac{1}{2}\|a\|_{H^{s}}^{2}+2k^{2}\|u\|_{H^{{s}}}^{2}.\nonumber
\end{align*}
Taking $k$ small enough and integrating (\ref{309}) in time on $[0,t]$, then we conclude that
\begin{align*}
\sup_{t}\|(a,u)\|_{H^{s}}^{2}+\int_{0}^{t}D_{0}(t^{\prime})dt^{\prime}\lesssim \|(a_0,u_0)\|_{H^{s}}^{2}\le \frac{\delta^2}{4}
\end{align*}
holds for small enough $\delta>0$. Then we achieve the conclusion.\hfill$\Box$

\section{The time decay rate of the global small solution to (\ref{eq1})}
In this section, we investigate the optimal time decay rate for (\ref{eq1}). Since we focus on the long-time behavior of solutions, $t$ will be taken to be sufficiently large. Moreover, we agree that all occurrences of $\delta$ and $C_k$ denote their positive powers throughout this paper.
\begin{prop}{\label{4prop2}}
  Under the same conditions as in Theorem \ref{1theo1}, if additionally $\left(a_{0}, u_{0}\right) \in \dot{B}_{2, \infty}^{-1}$, then there exists $C>0$ such that for every $t>0$, there holds
\begin{align*}
\|\Lambda^{s_1}(a,u)\|_{H^{s-s_{1}}}\le C(1+t)^{-\frac{1+s_{1}}{2\beta}},
\end{align*}
where $0\le s_{1}\le\beta$.
\end{prop}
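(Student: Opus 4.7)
The plan is to combine a differentiated version of the energy framework from Theorem \ref{1theo1} with Schonbek's Fourier splitting method, bootstrapped through a separate persistence argument for the Besov norm $\dot B_{2,\infty}^{-1}$. First I would apply $\Lambda^{s_1}$ to (\ref{eq1}) and redo the inner-product and commutator estimates used in Theorem \ref{1theo1} via Lemmas \ref{lemma4}--\ref{lemma5}, H\"older's inequality, and the smallness of $\delta$. Because $s_1\le\beta$, the cross term $\langle\Lambda^{s_1+\beta-1}\nabla a,\Lambda^{s_1+\beta-1}u\rangle_{H^{s-s_1-2\beta+1}}$ is still well-defined and still generates the parabolic dissipation of $a$, so I expect to obtain a closed inequality
\[
\frac{d}{dt}E_{s_1}(t)+D_{s_1}(t)\le 0,
\]
with $E_{s_1}\simeq \|\Lambda^{s_1}(a,u)\|_{H^{s-s_1}}^{2}$ and $D_{s_1}\simeq \|\Lambda^{s_1+\beta}a\|_{H^{s+1-2\beta-s_1}}^{2}+\|\Lambda^{s_1+\beta}u\|_{H^{s-s_1}}^{2}$.

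Second, I would feed this into the Fourier splitting trick. Introducing the time-dependent ball $S(t)=\{\xi:|\xi|^{2\beta}\le C_*/(1+t)\}$ for a large $C_*$ and applying Plancherel gives
\[
D_{s_1}(t)\ge \frac{C_*}{1+t}E_{s_1}(t)-\frac{C_*}{1+t}\int_{S(t)}\langle\xi\rangle^{2(s-s_1)}|\xi|^{2s_1}\bigl(|\widehat a|^{2}+|\widehat u|^{2}\bigr)\,d\xi.
\]
A dyadic decomposition and the definition of $\dot B_{2,\infty}^{-1}$ yield
\[
\int_{S(t)}|\xi|^{2s_1}|\widehat f|^{2}\,d\xi \lesssim R(t)^{2(s_1+1)}\|f\|_{\dot B_{2,\infty}^{-1}}^{2},\qquad R(t)=\Bigl(\tfrac{C_*}{1+t}\Bigr)^{\frac{1}{2\beta}},
\]
so the energy inequality reduces to the scalar ODE
\[
\frac{d}{dt}E_{s_1}+\frac{C_*}{1+t}E_{s_1}\lesssim (1+t)^{-\frac{s_1+1}{\beta}-1}\|(a,u)(t)\|_{\dot B_{2,\infty}^{-1}}^{2}.
\]
For $C_*$ large enough, Gr\"onwall absorbs the initial data and yields $E_{s_1}(t)\lesssim (1+t)^{-(s_1+1)/\beta}$, which is the claimed bound after extracting a square root.

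The core obstacle is justifying the uniform Besov bound $\|(a,u)(t)\|_{\dot B_{2,\infty}^{-1}}\le C$ used throughout the above computation; the hypothesis only provides it at $t=0$. Following the roadmap sketched after (\ref{101}), I would run a two-stage bootstrap: apply the Fourier splitting ODE first with only the initial Besov input to extract a preliminary weak decay $\|(a,u)\|_{H^s}\lesssim (1+t)^{-\frac{1}{4\beta}+\frac{1}{4}}$; upgrade this to $(1+t)^{-\frac{1}{4\beta}}$ by a time-weighted $H^s$ estimate; then run a Littlewood-Paley analysis on (\ref{eq1}) dyadic block by dyadic block, using the dissipative kernel of $(-\Delta)^{\beta}$ on each $\dot\Delta_j$ and Lemma \ref{lemma5} on the nonlinear source together with the partial decay already obtained, to propagate $\|(a,u)(t)\|_{\dot B_{2,\infty}^{-1}}\le C$ for all $t$. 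With this persistence in hand, the Fourier splitting of the second paragraph closes and yields the sharp $(1+t)^{-(s_1+1)/(2\beta)}$ rate for every $s_1\in[0,\beta]$. The boundary case $s_1=\beta$ is the most delicate because the cross-term inner-product estimate is just barely in the admissible range, so its most dangerous contributions would need to be absorbed by supplementary time-weighted information such as (\ref{101}) rather than by the dissipation $D_{\beta}$ alone.
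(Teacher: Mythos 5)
Your architecture for the case $s_1=0$ --- Fourier splitting over $S(t)=\{|\xi|^{2\beta}\le C_*(1+t)^{-1}\}$, the two-stage bootstrap through the rates $(1+t)^{-\frac{1}{4\beta}+\frac14}$ and $(1+t)^{-\frac{1}{4\beta}}$, and the propagation of a uniform $\dot B^{-1}_{2,\infty}$ bound on the solution by a blockwise Littlewood--Paley estimate on the nonlinearity --- is exactly what the paper does to reach (\ref{415}). The genuine gap is your first step for $s_1>0$: the closed inequality $\frac{d}{dt}E_{s_1}+D_{s_1}\le 0$, with $E_{s_1}\simeq\|\Lambda^{s_1}(a,u)\|_{H^{s-s_1}}^{2}$ and with $D_{s_1}$ starting at derivative order $s_1+\beta$, is not attainable. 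Redoing the estimates of Theorem \ref{1theo1} at level $s_1$ inevitably produces factors of order strictly below $s_1+\beta$: the analogue of (\ref{304}) yields, e.g., $\|\Lambda^{s_1+\beta}u\|_{L^2}\|\Lambda^{s_1-\beta+1}a\|_{L^2}\|\nabla a\|_{L^2}$ and $\|\Lambda^{s_1-\beta+1}a\|_{L^2}\|\Lambda^{s_1}a\|_{L^2}\|\Lambda^{1+\beta}u\|_{L^2}$, and after Young's inequality one is left with quantities such as $\delta^{2}\|\nabla a\|_{L^2}^{2}$ (not $\lesssim D_{s_1}$ once $s_1>1-\beta$, in particular at $s_1=\beta$ whenever $\beta>\tfrac12$) and $\delta^{2}\|\Lambda^{s_1}a\|_{L^2}^{2}$ (of order $s_1<s_1+\beta$, hence never dominated by $D_{s_1}$). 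This is precisely the obstruction the authors acknowledge when they say the energy cannot be closed in $H^{s-\beta}$ because of the inner product estimate, and it affects every $s_1\in(0,\beta]$, not only the endpoint you single out as delicate.

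The paper's actual route, which your last sentence gestures at but does not execute, is structurally different: it never constructs an energy--dissipation pair at positive order. Having the $s_1=0$ rate $E_0\lesssim(1+t)^{-1/\beta}$, it integrates (\ref{309}) against $(1+t)^{\frac1\beta+1}$ to obtain the key integrated bound (\ref{417}), namely $(1+t)^{-1}\int_0^t(1+t')^{\frac1\beta+1}D_0\,dt'\le C$. It then derives the \emph{non-closed} inequality (\ref{421}) for $\frac{d}{dt}\|\Lambda^{\beta}(a,u)\|_{H^{s-\beta}}^{2}$ --- with no cross term and no dissipation of $a$ at all --- whose right-hand side is controlled by $(1+t)^{-\frac1\beta-1}D_0$-type quantities, multiplies by $(1+t)^{\frac1\beta+2}$, and integrates using (\ref{417}) to get $\|\Lambda^{\beta}(a,u)\|_{H^{s-\beta}}^{2}\lesssim(1+t)^{-\frac1\beta-1}$; the intermediate values $s_1\in(0,\beta)$ then follow by interpolating between the two endpoints, so no intermediate functionals are ever needed. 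As written, your central mechanism for $s_1>0$ fails at the absorption step, and the proof does not close without replacing it by this (\ref{417})-based detour.
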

\begin{proof}
  Denote $S_{1}(t)=\left\{\xi:|\xi|^{2\beta}\le C_{2}(1+t)^{-1}\right\}$, $C_{2}$ is large enough. By Theorem \ref{1theo1}, we get
  \begin{align}\label{402}
  \frac{d}{dt}E_{0}(t)+\frac{C_{2}}{1+t}\left(k\gamma\|a\|_{H^{s}}^{2}+\|u\|_{H^{s}}^{2}\right)\lesssim \frac{1}{1+t}{\int_{S_{1}(t)}|\hat{a}(\xi)|^{2}+|\hat u}(\xi)|^{2}d\xi.
  \end{align}  
Applying
Fourier transform to (\ref{eq1}), we have
\begin{align*}
\left\{\begin{array}{l}\widehat{a}_{t}-i\xi_{k}\widehat{u}^{k}=\widehat{F},\\ [1ex]
 \widehat{u}^{j}_{t}+|\xi|^{2\beta}\widehat{u}^{j}+i\gamma\xi_{j}\widehat{a}=\widehat{H}^{j},\end{array}\right.
\end{align*}
where $F=-\mathrm{div}(au)$ and $H=K(a)\nabla a-u\cdot\nabla u+\frac{a}{1+a}(-\Delta)^{\beta}u$,
Then we deduce
\begin{align}\label{403}
\frac{1}{2}\frac{d}{dt}(\gamma|\widehat{a}|^{2}+|\widehat{u}|^{2})+|\xi|^{2\beta}|\widehat{u}|^{2}=\gamma\mathcal{R}e[\widehat{F} \cdot\overline{\widehat{a}}]+\mathcal{R}e[\widehat{H}\cdot\overline{\widehat{u}}]
\end{align}
Integrating (\ref{403}) in time on $[0,t]$, we get
\begin{align}\label{404}
\gamma|\widehat{a}|^{2}+|\widehat{u}|^{2}\lesssim (|\widehat{a}_{0}|^{2}+|\widehat{u}_{0}|^{2})+\int_{0}^{t}|\widehat{F}\cdot\overline{\widehat{a}}|+|\widehat{H}\cdot\overline{\widehat{u}}|dt^{\prime}.
\end{align}
Due to the fact $E(0)<\infty$ and $(a_0,u_0)\in \dot B_{2,\infty}^{-1}$, we find
\begin{align}\label{405}
\int_{S_{1}(t)}(|\widehat{a_{0}}|^{2}+|\widehat{u_{0}}|^{2})d\xi &\lesssim\sum_{j\leq\log_{2}\left[\frac{4}{3}C_{2}^{\frac{1}{2\beta}}(1+t)^{-\frac{1}{2\beta}}\right]}\int_{\mathbb{R}^{2}}2\varphi^{2}(2^{-j}\xi)\left(|\widehat{a_{0}}|^{2}+|\widehat{u_{0}}|^{2}\right)d\xi \\ \nonumber
 & \lesssim\sum_{j\leq\log_{2}\left[\frac{4}{3}C_{2}^{\frac{1}{2\beta}}(1+t)^{-\frac{1}{2\beta}}\right]}\left(\|\dot{\Delta}_{j}a_{0}\|_{L^{2}}^{2}+\|\dot{\Delta}_{j}u_{0}\|_{L^{2}}^{2}\right) \\ \nonumber
 & \lesssim (1+t)^{-\frac{1}{\beta}}\|(a_{0},u_{0})\|_{\dot{B}_{2,\infty}^{-1}}^{2}.\nonumber
\end{align}
By Minkowski's inequality, we arrive at
\begin{align}\label{406}
\int_{S_{1}(t)}\int_{0}^{t}|\widehat{F}\cdot\overline{\widehat{a}}|+|\widehat{H}\cdot\overline{\widehat{u}}|dt^{\prime}d\xi=& \int_{0}^{t}\int_{S_{1}(t)}|\widehat{F}\cdot\overline{\widehat{a}}|+|\widehat{H}\cdot\overline{\widehat{u}}|d\xi dt^{\prime} \\\nonumber
\lesssim &\ |S_{1}(t)|^{\frac{1}{2}}\int_{0}^{t}\|F\|_{L^1}\|a\|_{L^2}+\|H\|_{L^1}\|u\|_{L^2}dt^{\prime}\\\nonumber
\lesssim &\ (1+t)^{-\frac{1}{2\beta}}\int_{0}^{t}\left(\|a\|_{L^2}^2+\|u\|_{L^2}^2\right)\left(\|\nabla a\|_{L^2}+\|\nabla u\|_{L^2}+\|\Lambda^{2\beta}u\|_{L^2}\right)dt^{\prime}\\\nonumber
\lesssim &\ (1+t)^{-\frac{1}{2\beta}+\frac{1}{2}}\left(\int_{0}^{t}D_{0}(s)dt^{\prime}\right)^{\frac{1}{2}}\\\nonumber
\lesssim &\ (1+t)^{-\frac{1}{2\beta}+\frac{1}{2}}.\nonumber
\end{align}
Inserting (\ref{405}) and (\ref{406}) into (\ref{404}), we have
\begin{align}\label{407}
{\int_{S_{1}(t)}|\widehat{a}(\xi)|^{2}+|\widehat u}(\xi)|^{2}d\xi\lesssim (1+t)^{-\frac{1}{2\beta}+\frac{1}{2}}.
\end{align}
Plugging (\ref{407}) into (\ref{402}) gives rise to 
\begin{align*}
  \frac{d}{dt}E_{0}(t)+\frac{C_{2}}{1+t}\left(k\gamma\|a\|_{H^{s}}^{2}+\|u\|_{H^{s}}^{2}\right)\lesssim (1+t)^{-(\frac{1}{2\beta}-\frac{1}{2})}.
  \end{align*}  
Consequently, we get the initial time decay rate
\begin{align}\label{408}
E_{0}(t)\lesssim (1+t)^{-\frac{1}{2\beta}+\frac{1}{2}}.
  \end{align}
Plugging (\ref{406}), (\ref{407}) into (\ref{402}), we achieve
\begin{align*}
  \frac{d}{dt}E_{0}(t)&+\frac{C_{2}}{1+t}\left(k\gamma\|a\|_{H^{s}}^{2}+\|u\|_{H^{s}}^{2}\right)\lesssim (1+t)^{-1-\frac{1}{\beta}}\\\nonumber
  &+(1+t)^{-1-\frac{1}{2\beta}}\int_{0}^{t}\left(\|a\|_{L^2}^2+\|u\|_{L^2}^2\right)\left(\|\nabla a\|_{L^2}+\|\nabla u \|_{L^2}+\|\Lambda^{2\beta}u\|_{L^2}\right)dt^{\prime},\nonumber
  \end{align*} 
which implies
\begin{align}\label{409}
(1+t)^{1+\frac{1}{2\beta}}\frac{d}{dt}E_{0}(t)&+C_{2}(1+t)^{\frac{1}{2\beta}}\left(k\gamma\|a\|_{H^{s}}^{2}+\|u\|_{H^{s}}^{2}\right)\\\nonumber
&\lesssim (1+t)^{-\frac{1}{2\beta}}+\int_{0}^{t}\left(\|a\|_{L^2}^2+\|u\|_{L^2}^2\right)\left(\|\nabla a\|_{L^2}+\|\nabla u|_{L^2}+\|\Lambda^{2\beta}u\|_{L^2}\right)dt^{\prime}.\nonumber
\end{align}
Integrating (\ref{409}) in time on $[0,t]$, we get
\begin{align*}
(1+t)^{\frac{1}{2\beta}}E_{0}(t)\lesssim 1+\int_{0}^{t}\left(\|a\|_{L^2}^2+\|u\|_{L^2}^2\right)\left(\|\nabla a\|_{L^2}+\|\nabla u\|_{L^2}+\|\Lambda^{2\beta}u\|_{L^2}\right)dt^{\prime}.
\end{align*}
We set $\displaystyle N(t)=\sup_{0\le t^{\prime}\le t}(1+t^{\prime})^{\frac{1}{2\beta}}E_{0}(t^{\prime})$, then we find
\begin{align*}
N(t)\le C+C\int_{0}^{t}(1+t^{\prime})^{-\frac{1}{2\beta}}N(t^{\prime})(\|\nabla a\|_{L^2}+\|\nabla u\|_{L^2}+\|\Lambda^{2\beta}u\|_{L^2})dt^{\prime}.
\end{align*}
Applying Gronwall's inequality yields for any $t>0$, $N(t)<\infty$, which gives rise to
\begin{align}\label{410}
  E_{0}(t)\lesssim (1+t)^{-\frac{1}{2\beta}}.
\end{align}

Next we will prove the solution of (\ref{eq1}) belongs to some negative index Besov space. Applying $\dot\Delta_{j}$ to (\ref{eq1}), we find
\begin{align*}
 \left\{\begin{array}{l}\dot{\Delta}_{{j}}a_{t}+\text{div}\ \dot{\Delta}_{j}u=\dot{\Delta}_{j}F,\\ [1ex] \dot{\Delta}_{j}u_{t}+(-\Delta)^{\beta}\dot{\Delta}_{j}u+\gamma\dot{\Delta}_{j}\nabla a=\dot{\Delta}_{j}H.\end{array}\right.
\end{align*}
Then we get
\begin{align}\label{411}
\frac{d}{dt}\left(\gamma\|\dot{\Delta}_{j} a\|_{L^{2}}^{2}+\|\dot{\Delta}_{j}u\|_{L^{2} }^{2}\right)+2\|\Lambda^{\beta}\dot{\Delta}_{j}u\|_{L^{2}}^{2}\lesssim \|\dot{\Delta}_{j}F\| _{L^{2}}\|\dot{\Delta}_{j}a\|_{L^{2}}+\|\dot{\Delta}_{j}H\|_{L^{2}}\|\dot{ \Delta}_{j}u\|_{L^{2}}.
\end{align}
Multiplying (\ref{411}) by $2^{-2j}$ and taking $l^{\infty}$ norm, we have
\begin{align*}
\frac{d}{dt}\left(\gamma\|a\|_{\dot{B}^{-1}_{2,\infty}}^{2}+\|u\|_{\dot{B}^{-1}_{2,\infty}}^{2}\right)\lesssim \|F\|_{\dot{B}^{-1}_{2,\infty}}\|a\|_{ \dot{B}^{-1}_{2,\infty}}+\|H\|_{\dot{B}^{-1}_{2,\infty}}\|u\|_{ \dot{B}^{-1}_{2,\infty}}.  
\end{align*}
Let $\displaystyle M(t)=\sup_{0\le t^{\prime}\le t}\left(\gamma\|a\|_{\dot{B}^{-1}_{2,\infty}}+\|u\|_{\dot{B}^{-1}_{2,\infty}}\right)$, then we yield
\begin{align*}
M^{2}(t)\leq CM^{2}(0)+CM(t)\int_{0}^{t}\left(\|F\|_{\dot{B}^{-1}_{2,\infty}}+\|H\|_{\dot{B}^{-1}_{2,\infty}}\right)dt^{\prime}. 
\end{align*}
For any $\delta<\frac{1}{2\beta}$, due to (\ref{309}), we have
\begin{align*}
(1+t)^{\delta}\frac{d}{dt}{E}_0+(1+t)^{\delta}D_{0}\le 0.
\end{align*}
This together with (\ref{410}) ensures that
\begin{align*}
  (1+t)^{\delta}E_0+\int_{0}^{t}(1+t^{\prime})^{\delta}D_{0}dt^{\prime}\le C+\delta\int_{0}^{t}(1+t^{\prime})^{\delta-1}E_{0}dt^{\prime}\le C,
\end{align*}
Note that $ L^{1}\hookrightarrow\dot{B}^{-1}_{2,\infty}$, we conclude, for any $t> 0$ and $1-\frac{1}{2\beta}<\delta<\frac{1}{2\beta}$, that
\begin{align}\label{413}
\int_{0}^{t}(\|F\|_{\dot{B}^{-1}_{2,\infty}}+\|H\|_{\dot{B}^{-1}_{2,\infty}})dt^{\prime}&\lesssim  \int_{0}^{t}(\|F\|_{L^1}+\|H\|_{L^1})dt^{\prime}\\\nonumber
&\lesssim  \int_{0}^{t}(\|a\|_{L^2}+\|u\|_{L^2})(\|\nabla a\|_{L^2}+\|\nabla u\|_{L^2}+\|\Lambda^{2\beta} u\|_{L^2})dt^{\prime}\\\nonumber
&\lesssim  \int_{0}^{t}(1+t^{\prime})^{-\frac{1}{4\beta}}D_{0}^{\frac{1}{2}}dt^{\prime}\\\nonumber
&\lesssim  \int_{0}^{t}(1+t^{\prime})^{-\frac{1}{2\beta}-\delta}dt^{\prime}\int_{0}^{t}(1+t^{\prime})^{\delta}D_{0}^{2}dt^{\prime}<\infty.\nonumber
\end{align}
Hence, we deduce $M(t)<C$. From this we can obtain the optimal time decay rate for $E_{0}$. By (\ref{413}), we arrive at
\begin{align}\label{414}
\int_{S_{1}(t)}\int_{0}^{t}|\widehat{F}\cdot\overline{\widehat{a}}|+|\widehat{H}\cdot\overline{\widehat{u}}|dt^{\prime}d\xi=& \int_{0}^{t}\int_{S_{1}(t)}|\widehat{F}\cdot\overline{\widehat{a}}|+|\widehat{H}\cdot\overline{\widehat{u}}|d\xi dt^{\prime} \\\nonumber
\lesssim & \int_{0}^{t}\left(\|F\|_{L^1}\int_{S_{1}(t)}|{\widehat{a}}|d\xi+\|H\|_{L^1}\int_{S_{1}(t)}|{\widehat{u}}|d\xi\right
) dt^{\prime} \\\nonumber
\lesssim & \ (1+t)^{-\frac{1}{2\beta}}\int_{0}^{t}\left[\left(\|F\|_{L^1}+\|H\|_{L^1}\right)\left(\int_{S_{1}(t)}|\widehat{a}|^{2}+|\widehat{u}|^{2}d\xi\right)^{\frac{1}{2}}\right]dt^{\prime}\\\nonumber
\lesssim &\ (1+t)^{-\frac{1}{\beta}},\nonumber
\end{align}
where we use the fact
\begin{align*}
\left(\int_{S_{1}(t)}|\widehat{a}|^{2}+|\widehat{u}|^{2}d\xi\right)^{\frac{1}{2}}dt^{\prime}\lesssim (1+t)^{-\frac{1}{2\beta}}M(t).
\end{align*}
Thanks to (\ref{414}), (\ref{405}) and (\ref{402}), we achieve
\begin{align}\label{415}
 E_{0}(t)\lesssim  (1+t)^{-\frac{1}{\beta}}.
  \end{align}
From (\ref{309}) and (\ref{415}), we know
\begin{align}\label{416}
  \frac{d}{dt}\left[(1+t)^{\frac{1}{\beta}+1}E_{0}\right]+(1+t)^{\frac{1}{\beta}+1}D_{0}\lesssim  (1+t)^{\frac{1}{\beta}}E_{0},
\end{align}
Integrating (\ref{416}) over $[0,t]$, we infer that
\begin{align*}
  (1+t)^{\frac{1}{\beta}+1}E_{0}+\int_{0}^{t}(1+t^{\prime})^{\frac{1}{\beta}+1}D_{0}dt^{\prime}\lesssim&  \int_{0}^{t}(1+t^{\prime})^{\frac{1}{\beta}}E_{0}dt^{\prime}\lesssim C(1+t),
\end{align*}
which gives rise to
\begin{align}\label{417}
(1+t)^{-1}\int_{0}^{t}(1+t^{\prime})^{\frac{1}{\beta}+1}D_{0}dt^{\prime}\le C.
\end{align}
Next we want to study the $\beta$-order time decay rate, We first get, taking the $L^2$ inner product of the first two equations of (\ref{eq1}) with $(\gamma\Lambda^{\beta} a,\Lambda^{\beta} u)$, that 
 \begin{align}\label{418}
&\frac{1}{2}\frac{d}{dt}\|(\sqrt{\gamma}\Lambda^{\beta} a,\Lambda^{\beta} u)\|_{L^{2}}^{2}+\|\Lambda^{2\beta}u\|_{L^{2}}^{2}\\\nonumber
=&-\langle \Lambda^{\beta} (\mathrm{div}(au)),\Lambda^{\beta} a \rangle
-\langle \Lambda^{\beta} (k(a) \nabla a),\Lambda^{\beta} u\rangle
-\langle \Lambda^{\beta}(u\cdot\nabla u), \Lambda^{\beta} u\rangle\\\nonumber
&+\langle \Lambda^{\beta}\frac{a}{1+a}(-\Delta)^{\beta}u, \Lambda^{\beta} u\rangle\\\nonumber
\triangleq&\ J_1+J_2+J_3+J_4.\nonumber
\end{align}
For $J_1$, we have
\begin{align*}
  J_1=& -\langle \Lambda^{\beta} (\mathrm{div}(au)),\Lambda^{\beta} a \rangle\\
  =& -\langle \Lambda^{\beta} (u\cdot\nabla a),\Lambda^{\beta} a \rangle-\langle \Lambda^{\beta} (a\mathrm{div}u ),\Lambda^{\beta} a \rangle\\
  =& -\langle [\Lambda^{\beta}, u\cdot\nabla] a,\Lambda^{\beta} a \rangle-\langle u\cdot\nabla\Lambda^{\beta} a,\Lambda^{\beta} a \rangle-\langle \Lambda^{\beta} (a\mathrm{div}u ),\Lambda^{\beta} a \rangle\\
  \lesssim&\ \|\Lambda^{\beta} u\|_{L^{\frac{2}{1-\beta}}}\|\Lambda^{\beta} a\|_{L^{\frac{2}{\beta}}}\|\nabla a\|_{L^{2}}+\|\nabla u\|_{L^{\frac{1}{1-\beta}}}\|\Lambda^{\beta} a\|_{L^{\frac{2}{\beta}}}^{2}+\|\Lambda a\|_{L^{\frac{2}{\beta}}}\| a\|_{L^{\frac{2}{1-\beta}}}\|\Lambda^{2\beta} u\|_{L^{2}}\\
  &+\|\Lambda^{2\beta-1} a\|_{L^{\frac{2}{\beta}}}\|\mathrm{div}u\|_{L^{\frac{1}{1-\beta}}}\|\Lambda a\|_{L^{\frac{2}{\beta}}}\\
  \lesssim&\ \|\Lambda^{2\beta} u\|_{L^{2}}\|\nabla a\|_{L^{2}}^{2}+\|\Lambda^{2-\beta} a\|_{L^{2}}\|\Lambda^{\beta} a\|_{L^{2}}\|\Lambda^{2\beta} u\|_{L^{2}}+\|\Lambda^{\beta} a\|_{L^{2}}\|\Lambda^{2\beta}u\|_{L^{2}}\|\Lambda^{2-\beta} a\|_{L^{2}}\\
  \le & C\|\nabla a\|_{L^{2}}^{4}+\frac{1}{100}\|\Lambda^{2\beta} u\|_{L^{2}}^{2}+C\|\Lambda^{2-\beta} a\|_{L^{2}}^{2}\|\Lambda^{\beta}a\|_{L^{2}}^{2}.
\end{align*}  
Similarly, we have
\begin{align*}
  J_2= -\langle k(a) \nabla a,\Lambda^{2\beta} u\rangle
  \lesssim \|\Lambda^{2\beta} u\|_{L^{2}}\| a\|_{L^{\frac{2}{1-\beta}}}\|\nabla a\|_{L^{\frac{2}{\beta}}}
  \lesssim\|\Lambda^{2\beta} u\|_{L^{2}}\|\Lambda^{2-\beta} a\|_{L^{2}}\|\Lambda^{\beta} a\|_{L^{2}},
\end{align*}
and
\begin{align*}
  J_3+J_4\lesssim\delta\|\Lambda^{2\beta} u\|_{L^{2}}.
\end{align*}
Plugging the estimates of $J_1$ to $J_4$ into (\ref{418}) yields 
 \begin{align}\label{419}
\frac{1}{2}\frac{d}{dt}\|(\sqrt{\gamma}\Lambda^{\beta} a,\Lambda^{\beta} u)\|_{L^{2}}^{2}+\frac{3}{4}\|\Lambda^{2\beta}u\|_{L^{2}}^{2}\lesssim\|\nabla a\|_{L^{2}}^{4}+\|\Lambda^{2-\beta} a\|_{L^{2}}^{2}\|\Lambda^{\beta}a\|_{L^{2}}^{2}.
\end{align}
From (\ref{304}) and the estimates of $I_1$ to $I_4$, one can obtain
\begin{align}\label{420}
&\frac{1}{2}\frac{d}{dt}\|(\sqrt{\gamma}\Lambda^{s} a,\Lambda^{s}u)\|_{L^2}^{2}+\|\Lambda^{s+\beta}u\|_{L^2}^2\\\nonumber
\lesssim&\|\Lambda^s a\|_{L^{2}}^{2}\|\Lambda^{2-\beta} a\|_{L^{2}}^{2}+\|\nabla u\|_{L^{2}}^{2}\|\Lambda^{s-\beta+1} u\|_{L^{2}}^{2}+\frac{1}{100}\|\Lambda^{1+\beta}u\|_{L^{2}}^2.
\end{align}
By (\ref{419}) and (\ref{420}), we have
\begin{align}\label{421}
&\frac{1}{2}\frac{d}{dt}\|(\sqrt{\gamma}\Lambda^{\beta} a,\Lambda^{\beta} u)\|_{H^{s-\beta}}^{2}+\frac{1}{2}\|\Lambda^{2\beta}u\|_{H^{s-\beta}}^2\\\nonumber
\lesssim&\| \Lambda^{\beta}a\|_{H^{s-\beta}}^{2}\|\Lambda^{2-\beta} a\|_{H^{s-1}}^{2}+\|\nabla u\|_{L^{2}}^{2}\|\Lambda^{s-\beta+1} u\|_{L^{2}}^{2}+\|\nabla a\|_{L^{2}}^{4}.
\end{align}
Multiplying (\ref{421}) by $(1+t)^{\frac{1}{\beta}+2}$, one can arrive at
\begin{align}\label{4211}
&\frac{d}{dt}(1+t)^{\frac{1}{\beta}+2}\|(\sqrt{\gamma}\Lambda^{\beta} a,\Lambda^{\beta} u)\|_{H^{s-\beta}}^{2}+(1+t)^{\frac{1}{\beta}+2}\|\Lambda^{2\beta}u\|_{H^{s-\beta}}^2\\\nonumber
\lesssim&(1+t)^{2}D_{0}+(1+t)^{\frac{1}{\beta}+1}\|(\sqrt{\gamma}\Lambda^{\beta} a,\Lambda^{\beta} u)\|_{H^{s-\beta}}^{2}\\\nonumber
\lesssim&(1+t)^{\frac{1}{\beta}+1}D_{0},
\end{align}
Integrating (\ref{4211}) over $[0,t]$, we infer that
\begin{align*}
(1+t)^{\frac{1}{\beta}+2}\|(\sqrt{\gamma}\Lambda^{\beta} a,\Lambda^{\beta} u)\|_{H^{s-\beta}}^{2}
\lesssim\int_{0}^{t}(1+t^{\prime})^{\frac{1}{\beta}+1}D_{0}dt^{\prime}.
\end{align*}
Therefore, we get
\begin{align*}
(1+t)^{\frac{1}{\beta}+1}\|(\sqrt{\gamma}\Lambda^{\beta} a,\Lambda^{\beta} u)\|_{H^{s-\beta}}^{2}
\lesssim\frac{1}{1+t}\int_{0}^{t}(1+t^{\prime})^{\frac{1}{\beta}+1}D_{0}dt^{\prime}\le C.
\end{align*}
As a result, it comes out
\begin{align}\label{422}
\|(\sqrt{\gamma}\Lambda^{\beta} a,\Lambda^{\beta} u)\|_{H^{s-\beta}}^{2}
\lesssim (1+t)^{-\frac{1}{\beta}-1}.
\end{align}
This completes the proof of Proposition \ref{4prop2}.
\end{proof}
   Now we introduce the energy and dissipation functionals for $(a,u)$ as follows:
 \begin{align*}
  \widetilde{E}_{s}&=(1+t)^{b}\left\|\Lambda^{s}(\sqrt{\gamma}a, u)\right\|_{L^{2}}^{2}+k\langle\Lambda^{s-\beta} \nabla a,\nabla \Lambda^{s-\beta} u\rangle,\\
\widetilde{D}_{s}&=(1+t)^{b}\|\Lambda^{s+\beta} u\|_{L^{2}}^{2}+\frac{k\gamma}{2}\|\nabla \Lambda^{s-\beta} a\|_{L^{2}}^{2},
 \end{align*}
 where $b=2-\frac{1}{\beta}\in [0,1)$ and $k>0$ is a small enough constant. Next we will prove the time decay rate for the highest derivative of the solution to (\ref{eq1}).
   \begin{prop}\label{4prop3}
Under the same conditions as in Proposition \ref{4prop2}, then there exists $C>0$ such that for every  $t>0$, there holds
\begin{align*}
\left\|\Lambda^{s_1}(a, u)\right\|_{L^{2}} \leq C(1+t)^{-\frac{s_1+1}{2 \beta}},
\end{align*}  
where $s_1\in [0,s]$.
\end{prop}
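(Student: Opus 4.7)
The strategy combines a time-weighted energy estimate at the top order $s$ with the Fourier splitting method, using the intermediate-order decay bounds from Proposition \ref{4prop2}, especially (\ref{415}) and (\ref{422}), to control the low-frequency and nonlinear contributions. Once the bound at $s_{1}=s$ is in hand, Gagliardo--Nirenberg interpolation (Lemma \ref{lemma3}) between the endpoints $s_{1}=0$ and $s_{1}=s$ delivers every intermediate $s_{1}\in(0,s)$.

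\textbf{Step 1 (top-order energy identity).} Apply $\Lambda^{s}$ to (\ref{eq1}), pair with $(\gamma\Lambda^{s}a,\Lambda^{s}u)$, and run the commutator and product estimates of Lemmas \ref{lemma4}--\ref{lemma5} exactly as in (\ref{304})--(\ref{305}). The linear pressure-divergence terms cancel between the two equations, leaving
\[
\tfrac{1}{2}\tfrac{d}{dt}\|\Lambda^{s}(\sqrt{\gamma}a,u)\|_{L^{2}}^{2}+\tfrac{3}{4}\|\Lambda^{s+\beta}u\|_{L^{2}}^{2}\le \mathcal{N}_{s}(t),
\]
with $\mathcal{N}_{s}$ of exactly the form already bounded in (\ref{420}). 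Mimicking (\ref{306})--(\ref{308}) at order $s-\beta$ produces the companion cross-term identity
\[
\tfrac{d}{dt}\langle k\Lambda^{s-\beta}\nabla a,\Lambda^{s-\beta}\nabla u\rangle+\tfrac{k\gamma}{2}\|\nabla\Lambda^{s-\beta}a\|_{L^{2}}^{2}\le Ck\|\Lambda^{s+\beta}u\|_{L^{2}}^{2}+\widetilde{\mathcal{N}}_{s}(t).
\]

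\textbf{Step 2 (time-weighted inequality).} Multiplying the first identity by $(1+t)^{b}$ with $b=2-\tfrac{1}{\beta}\in[0,1)$, adding the cross-term identity with $k$ small, and collecting gives
\[
\tfrac{d}{dt}\widetilde{E}_{s}+\tfrac{1}{2}\widetilde{D}_{s}\le b(1+t)^{b-1}\|\Lambda^{s}(\sqrt{\gamma}a,u)\|_{L^{2}}^{2}+(1+t)^{b}\bigl[\mathcal{N}_{s}(t)+\widetilde{\mathcal{N}}_{s}(t)\bigr].
\]
Now set $S_{2}(t)=\{\xi:|\xi|^{2\beta}\le C_{3}(1+t)^{-1}\}$. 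On $S_{2}(t)^{c}$ one has $|\xi|^{2\beta}\ge C_{3}(1+t)^{-1}$, so
\[
\widetilde{D}_{s}\ge \tfrac{C_{3}}{1+t}\widetilde{E}_{s}-C(1+t)^{b-1}\int_{S_{2}(t)}|\xi|^{2s}(|\widehat{a}|^{2}+|\widehat{u}|^{2})\,d\xi,
\]
and choosing $C_{3}$ large enough that $\tfrac{C_{3}}{2(1+t)}\widetilde{E}_{s}$ dominates $b(1+t)^{b-1}\|\Lambda^{s}(\sqrt{\gamma}a,u)\|_{L^{2}}^{2}$ closes the Schonbek-type inequality
\[
\tfrac{d}{dt}\widetilde{E}_{s}+\tfrac{C_{3}}{2(1+t)}\widetilde{E}_{s}\lesssim (1+t)^{b-1}\!\int_{S_{2}(t)}\!\!|\xi|^{2s}(|\widehat{a}|^{2}+|\widehat{u}|^{2})\,d\xi+(1+t)^{b}\bigl[\mathcal{N}_{s}+\widetilde{\mathcal{N}}_{s}\bigr].
\]

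\textbf{Step 3 (forcing bounds and Gronwall).} On $S_{2}(t)$, $|\xi|^{2s}\le C(1+t)^{-s/\beta}$, and (\ref{415}) yields $\int_{S_{2}(t)}(|\widehat{a}|^{2}+|\widehat{u}|^{2})\,d\xi\lesssim (1+t)^{-1/\beta}$, so the low-frequency term is at most $C(1+t)^{b-1-(s+1)/\beta}$. For the cubic pieces, Cauchy--Schwarz together with (\ref{422}) lets the bad parts be absorbed into $\tfrac{1}{4}\widetilde{D}_{s}$, leaving a remainder of the same order $(1+t)^{b-1-(s+1)/\beta}$. The integrating factor $(1+t)^{C_{3}/2}$ now turns the inequality into
\[
\tfrac{d}{dt}\bigl[(1+t)^{C_{3}/2}\widetilde{E}_{s}\bigr]\lesssim (1+t)^{C_{3}/2+b-1-(s+1)/\beta},
\]
so integration gives $\widetilde{E}_{s}(t)\lesssim (1+t)^{b-(s+1)/\beta}=(1+t)^{2-(s+2)/\beta}$. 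For $k$ small the cross term in $\widetilde{E}_{s}$ is controlled by its main quadratic part, hence $\|\Lambda^{s}(a,u)\|_{L^{2}}^{2}\lesssim (1+t)^{-b}\widetilde{E}_{s}\lesssim (1+t)^{-(s+1)/\beta}$. Finally, Lemma \ref{lemma3} applied with $\theta=s_{1}/s$ interpolates between $\|(a,u)\|_{L^{2}}\lesssim(1+t)^{-1/(2\beta)}$ from (\ref{415}) and the top-order bound just obtained, producing the announced rate $\|\Lambda^{s_{1}}(a,u)\|_{L^{2}}\lesssim(1+t)^{-(s_{1}+1)/(2\beta)}$ for every $s_{1}\in[0,s]$.

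\textbf{Main obstacle.} The chief technical point is the simultaneous absorption in Step 2 of the weight-derivative term $b(1+t)^{b-1}\widetilde{E}_{s}$ and the cross-term leftover $Ck\|\Lambda^{s+\beta}u\|_{L^{2}}^{2}$ into the single Fourier-splitting dissipation, uniformly in $t$; $k$ must be chosen small enough for the second absorption while $C_{3}$ is being taken large enough for the first. The hypothesis $\tfrac{1}{2}\le\beta<1$, which places $b\in[0,1)$, is precisely what makes the weight derivative integrable against the Fourier-splitting gain, and the sharpness of the forcing rate $(1+t)^{b-1-(s+1)/\beta}$ in Step 3 depends crucially on the intermediate estimate (\ref{422}) already established within the earlier bootstrap.
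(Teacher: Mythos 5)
Your proposal follows essentially the same route as the paper: the same weighted functionals $\widetilde{E}_{s},\widetilde{D}_{s}$ with $b=2-\tfrac{1}{\beta}$, the same Fourier splitting combined with (\ref{415}) for the low frequencies and (\ref{422}) for absorbing the nonlinearities, and the same concluding interpolation between $s_1=0$ and $s_1=s$ (the paper multiplies by the explicit weight $(1+t)^{\frac{s+1}{\beta}-b+1}$ rather than your integrating factor $(1+t)^{C_3/2}$, an immaterial variant). The only point to tighten is your closing remark that the cross term in $\widetilde{E}_{s}$ is "controlled by its main quadratic part": since $s-\beta+1>s$ for $\beta<1$ this is not a pointwise domination, and one must interpolate it against the decaying $L^{2}$ norm as the paper does in its final chain of inequalities.
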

\begin{proof}
  It is easy to check that
 \begin{align}\label{430}
&\frac{1}{2} \frac{d}{d t}\|\Lambda^{s}(\sqrt{\gamma}a, u)\|_{L^{2}}^{2}+\|\Lambda^{s+\beta} u\|_{L^{2}}^{2}=\left\langle\gamma\Lambda^{s} F, \Lambda^{s} a\right\rangle+\left\langle\Lambda^{s} H, \Lambda^{s} u\right\rangle,
\end{align}
and
\begin{align}\label{431}
&\frac{d}{d t}\langle\Lambda^{s-\beta} u,\nabla \Lambda^{s-\beta} a\rangle+\gamma\|\nabla \Lambda^{s-\beta} a\|_{L^{2}}^{2} \\\nonumber
&=\langle-\Lambda^{s-\beta}F +\mathrm{div}\Lambda^{s-\beta} u, \mathrm{div}\Lambda^{s-\beta} u\rangle+\langle\Lambda^{s-\beta}(H-\Lambda^{2\beta} u), \nabla \Lambda^{s-\beta} a\rangle.\nonumber
 \end{align}
 Combining (\ref{430}) and (\ref{431}) leads to
 \begin{align}\label{432}
 \frac{d}{dt}\widetilde{E}_{s}+2\widetilde{D}_{s}=&\ b(1+t)^{b-1}\|\Lambda^{s}(\sqrt{\gamma} a,u)\|_{L^{2}}^{2}+2(1+t)^ {b}(\langle\gamma\Lambda^{s}F,\Lambda^{s}a\rangle+\langle\Lambda^{s}H,\Lambda^{s}u\rangle)\\ \nonumber
 &+k\langle(\Lambda^{s-\beta}H-\Lambda^{s+\beta}u),\nabla \Lambda^{s-\beta}a\rangle-k\langle\Lambda^{s-\beta}(F-\text{div }u),\text{div }\Lambda^{s-\beta}u\rangle.\nonumber
 \end{align}
By Lemma \ref{lemma3}, we observe that
 \begin{align}\label{433}
\|\Lambda^{s}a\|_{L^2}^{2}\lesssim& \|a\|_{L^2}^{\frac{2(1-\beta)}{s-\beta+1}}\|\Lambda^{s-\beta+1}a\|_{L^2}^{\frac{2s}{s-\beta+1}}\\\nonumber
\lesssim& (1+t)^{-\frac{s+1}{\beta}+b-1}+(1+t)^{\frac{(s-\beta+1)(1-\beta)}{s\beta}}\|\Lambda^{s-\beta+1}a\|_{L^2}^{2}\\\nonumber
\lesssim& (1+t)^{-\frac{s+1}{\beta}+b-1}+(1+t)^{\frac{(2-\beta)(1-\beta)}{\beta}}\|\Lambda^{s-\beta+1}a\|_{L^2}^{2}.\nonumber
 \end{align}
Hence, thanks to (\ref{304}) and (\ref{422}), we obtain
 \begin{align*}
 &(1+t)^{b}\langle\gamma\Lambda^{s}F,\Lambda^{s}a\rangle\\\nonumber
 \lesssim &\ (1+t)^{b}(\|\Lambda^{s+\beta} u\|_{L^{2}}\|\Lambda^{s-\beta+1} a\|_{L^{2}}\|\nabla a\|_{L^{2}}+\|\Lambda^{s-\beta+1} a\|_{L^{2}}\|\Lambda^{s} a\|_{L^{2}}\|\Lambda^{1+\beta}u\|_{L^{2}}\\\nonumber
&+\|\Lambda^{s+\beta}u \|_{L^{2}}\|\Lambda^{s-\beta+1} a\|_{L^{2}}\|a\|_{L^{\infty}})\\\nonumber
 \lesssim&\ \delta \widetilde{D}_{s}+(1+t)^{b}\|\Lambda^{\beta} u\|_{L^{2}}\|\Lambda^{s-\beta+1} a\|_{L^{2}}\|\Lambda^{s} a\|_{L^{2}}\\ \nonumber
 \lesssim&\ \delta \widetilde{D}_{s}+(1+t)^{b}\|\Lambda^{\beta} u\|_{L^{2}}\|\Lambda^{s} a\|_{L^{2}}^{2}\\\nonumber
  \lesssim&\ \delta \widetilde{D}_{s}+(1+t)^{-\frac{s+1}{\beta}+b-1}.
 \end{align*}
 Along the same line, by the estimates from $I_1$ to $I_3$, we deduce that
  \begin{align*}
&(1+t)^{b}\left\langle\Lambda^{s} H, \Lambda^{s} u\right\rangle\\
\lesssim &(1+t)^{b}\|\Lambda^{s+\beta}u\|_{L^2}(\|\Lambda^{s-\beta+1}a\|_{L^2}\|a\|_{L^\infty}+\|\Lambda^{s}a\|_{L^{2}}\|\Lambda^{2-\beta} a\|_{L^{2}}\\
&+\|\Lambda^{s-\beta+1}u\|_{L^2}\|\nabla u\|_{L^2}+\|\Lambda^{s}a\|_{L^{2}}\|\Lambda^{1+\beta}u\|_{L^{2}}+\|\Lambda^{s+\beta}u\|_{L^{2}}\|a \|_{L^{\infty}})\\
\lesssim &\delta \widetilde{D}_{s}+(1+t)^{b}\|\Lambda^{s+\beta} u\|_{L^{2}}(\|\Lambda^{\beta} a\|_{L^{2}}\|\Lambda^{s} a\|_{L^{2}}+\|\Lambda^{s}a\|_{L^{2}}\|\Lambda^{\beta}u\|_{L^{2}})\\ \nonumber
\lesssim&\ \delta \widetilde{D}_{s}+(1+t)^{-\frac{s+1}{\beta}+b-1}.
\end{align*}
where we use the fact
 \begin{align}\label{434}
  \|\Lambda^{s-\beta+1} u\|_{L^{2}}^{2} \lesssim 
   (1+t)^{-\frac{s+1}{\beta}+b-1}+(1+t)^{b}\|\Lambda^{s+\beta}u\|_{L^{2}}^{2}.
 \end{align}
Similarly, one has
\begin{align*}
&k\langle\Lambda^{s-\beta}F +\mathrm{div}\Lambda^{s-\beta} u, \mathrm{div}\Lambda^{s-\beta} u\rangle+k\langle\Lambda^{s-\beta}(H+\Lambda^{2\beta} u), \nabla \Lambda^{s-\beta} a\rangle\\\nonumber
\le&  C( \delta  + k){\widetilde{D}}_{s}+\frac{k}{100}\|\Lambda^{s-\beta+1} a\|_{L^{2}}^{2} + C(1+t)^{-\frac{s+1}{\beta}+b-1}.
 \end{align*}
 Combining above, we get
 \begin{align}\label{435}
&\frac{d}{dt}\widetilde{E}_{s}+2\widetilde{D}_{ s}\le\ b(1+t)^{b-1}\|\Lambda^{s}(\gamma a,u)\|_{L^{2}}^{2}\\\nonumber
 &+C\left( {\delta+ k}\right) {\widetilde{D}}_{s } + C(1+t)^{-\frac{s+1}{\beta}+b-1}+\frac{k}{100}\|\Lambda^{s-\beta+1} a\|_{L^{2}}^{2}.
 \end{align}
 Recall that $S_{1}(t)=\left\{\xi:|\xi|^{2\beta}\le C_{2}(1+t)^{-1}\right\}$. For sufficiently large $C_2$, from (\ref{435}) we conclude that
\begin{align*}
\frac{d}{dt}{\widetilde{E}}_{s}+\frac{kC_{2}}{2}(1+t)^{b-1}\|\Lambda^{s}(a,u)\|_{L^2} \lesssim  {\left( 1 + t\right) }^{-\frac{s+1}{\beta}+b-1}.
\end{align*}
Applying the time weighted energy estimate, taking  $kC_2$ big enough, by Lemma \ref{lemma3} and (\ref{415}), then we obtain
\begin{align*}
  &(1+t)^{\frac{s+1}{\beta}-b+1}\widetilde{E}_{s}+\int_{0}^{t}\frac{kC_{2}}{2}(1+t^{\prime})^{\frac{s+1}{\beta}}\|\Lambda^{s}(a,u)\|_{L^{2}}^{2}dt^{\prime} \\
  \lesssim& (1+t)+\int_0^t(1+t^{\prime})^{\frac{s+1}{\beta}-b}|\langle\Lambda^{s-\beta}u,\nabla\Lambda^{s-\beta}a\rangle| dt^{\prime}+\int_{0}^{t}(1+t^{\prime})^{\frac{s+1}{\beta}}\|\Lambda^{s}(a,u)\|_{L^{2}}^{2}dt^{\prime} \\
  \lesssim & (1+t)+\int_0^t(1+t^{\prime})^{\frac{s+1}{\beta}}\|\Lambda^{s}(a,u)\|_{L^2}^2dt^{\prime}+\int_0^t(1+t^{\prime})^{\frac{1-s}{\beta}}\|(a,u)\|_{L^2}^2dt^{\prime} \\
  \lesssim& (1+t)+\int_{0}^{t}(1+t^{\prime})^{\frac{s+1}{\beta}}\|\Lambda^{s}(a,u)\|_{L^{2}}^{2}dt^{\prime}.
\end{align*}
By virtue of the time weighted energy estimate and (\ref{415}) again, we achieve
\begin{align*}
(1+t)^{\frac{s+1}{\beta}+1}\|\Lambda^{s}(a,u)\|_{L^{2}}^{2} 
&\lesssim (1+t)+k(1+t)^{\frac{s+1}{\beta}+1-b}|\langle\Lambda^{s-\beta}u,\nabla\Lambda^{s-\beta}a\rangle|\\
 & \lesssim (1+t)+k(1+t)^{\frac{s+1}{\beta}+1}\|\Lambda^{s}(a,u)\|_{L^2}^2+k(1+t)^{\frac{1-s}{\beta}+1}\|(a,u)\|_{L^2}^2 \\
 & \lesssim (1+t)+k(1+t)^{\frac{s+1}{\beta}+1}\|\Lambda^{s}(a,u)\|_{L^{2}}^{2}+Ck(1+t)^{-\frac{s}{\beta}+1},
\end{align*}
which gives rise to
\begin{align}\label{436}
\left\|\Lambda^{s}(a, u)\right\|_{L^{2}} \leq C(1+t)^{-\frac{s+1}{2 \beta}}.
\end{align}
By (\ref{415}) and (\ref{436}), we complete the proof of Proposition \ref{4prop3}.
\end{proof}
\begin{prop}\label{4prop4}
  Under the same conditions as in Proposition \ref{4prop3}, if additionally $\left(a_{0}, u_{0}\right) \in H^{s+\beta}$  and  $0<\left|\int_{\mathbb{R}^{2}}\left(a_{0}, u_{0}\right) d x\right|$, then there exists $C_{\beta} \leq C $ such that
\begin{align*}
\left\|\Lambda^{s_1}(a, u)\right\|_{L^{2}} \geq \frac{C_{\beta}}{2}(1+t)^{-\frac{s_1+1}{2 \beta}},
\end{align*}
where $s_1\in [0,s]$.
\end{prop}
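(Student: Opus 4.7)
The plan is to work on the Fourier side, splitting the solution into a main linear contribution driven by the initial data at low frequencies and a nonlinear remainder to be absorbed. Starting from the Fourier identity (\ref{403}) and using $|\hat{u}|^2 \leq \gamma|\hat{a}|^2 + |\hat{u}|^2$, one obtains the pointwise-in-$\xi$ inequality
\begin{align*}
\gamma|\hat{a}(\xi,t)|^2 + |\hat{u}(\xi,t)|^2 \geq e^{-2t|\xi|^{2\beta}}\bigl(\gamma|\hat{a}_0(\xi)|^2 + |\hat{u}_0(\xi)|^2\bigr) - 2\int_0^t e^{-2(t-s)|\xi|^{2\beta}}\bigl|\gamma\hat{F}\,\overline{\hat{a}} + \hat{H}\,\overline{\hat{u}}\bigr|(s)\,ds.
\end{align*}
Multiplying by $|\xi|^{2s_1}$ and integrating over $\mathbb{R}^2$ decomposes $\|\Lambda^{s_1}(a,u)\|_{L^2}^2$ into a main piece and a remainder piece.

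For the main piece, the hypothesis $M_0 := \int_{\mathbb{R}^2}(a_0,u_0)\,dx \neq 0$ (which implicitly requires $(a_0,u_0)\in L^1$) and continuity of the Fourier transform yield a fixed $c_0 > 0$ with $\gamma|\hat{a}_0|^2 + |\hat{u}_0|^2 \geq |M_0|^2/2$ on $\{|\xi|\leq c_0\}$. The change of variables $\eta = t^{1/(2\beta)}\xi$ then produces, for $t$ large,
\begin{align*}
\int_{\mathbb{R}^2}|\xi|^{2s_1} e^{-2t|\xi|^{2\beta}}\bigl(\gamma|\hat{a}_0|^2 + |\hat{u}_0|^2\bigr)\,d\xi \geq C_\beta^2(1+t)^{-(s_1+1)/\beta},
\end{align*}
with $C_\beta$ proportional to $|M_0|$. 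This is exactly the desired lower bound, pending control of the nonlinear remainder.

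To absorb the remainder, I would exploit the divergence structure of $F = -\mathrm{div}(au)$ (giving $|\hat{F}(\xi,s)|\leq |\xi|\,\|a(s)\|_{L^2}\|u(s)\|_{L^2}$) and of the divergence parts of $H$ (namely $K(a)\nabla a$, which is a perfect gradient, and the $\mathrm{div}(u\otimes u)$ piece from the rewriting $u\cdot\nabla u = \mathrm{div}(u\otimes u) - u\,\mathrm{div}\,u$), combined with the decay rates from Proposition \ref{4prop3} and their extension to $\|\Lambda^{2\beta}u\|_{L^2}$ afforded by the $H^{s+\beta}$ hypothesis. After Cauchy--Schwarz in $\xi$ and time-splitting at $s=t/2$, the remainder is bounded by $C\delta^{\kappa}(1+t)^{-(s_1+1)/\beta}$ for some $\kappa>0$, and this is at most $(C_\beta^2/4)(1+t)^{-(s_1+1)/\beta}$ when $\delta$ is sufficiently small relative to $|M_0|$. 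Taking square roots yields the claim.

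The main obstacle is that the nonlinear contribution naturally shares the same decay rate as the main term, so the argument closes by absorption rather than outright domination. The non-divergence terms in $H$---especially $u\,\mathrm{div}\,u$ and $\frac{a}{1+a}(-\Delta)^\beta u$---are the delicate pieces: they require the $H^{s+\beta}$ regularity to propagate decay to $\|\Lambda^{2\beta}u\|_{L^2}$ so that, after time integration, they still match the $(1+t)^{-(s_1+1)/\beta}$ decay of the main term rather than overwhelming it.
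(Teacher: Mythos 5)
Your treatment of the main term is correct and coincides with the paper's: the paper likewise isolates the linear evolution (via the explicit decomposition $(a,u)=(a_L,u_L)+(a_N,u_N)$ and the system (\ref{440}), rather than a pointwise Duhamel inequality) and derives $\|\Lambda^{s_1}(a_L,u_L)\|_{L^2}^2\geq C_\beta^2(1+t)^{-(s_1+1)/\beta}$ from $|(\widehat{a}_0,\widehat{u}_0)(\xi)|\geq c_0/2$ near $\xi=0$, exactly as you do. The gap is in the remainder. The claimed bound $C\delta^{\kappa}(1+t)^{-(s_1+1)/\beta}$ does not follow from Cauchy--Schwarz in $\xi$ together with the decay rates of Proposition \ref{4prop3}. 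Concretely, for $t'\in[0,t/2]$ one has $\bigl(\int|\xi|^{2s_1}e^{-2(t-t')|\xi|^{2\beta}}d\xi\bigr)^{1/2}\sim t^{-(s_1+1)/(2\beta)}$ and $\bigl(\int|\xi|^{2s_1}e^{-2(t-t')|\xi|^{2\beta}}|\widehat a(t')|^2d\xi\bigr)^{1/2}\leq\|\Lambda^{s_1}a(t')\|_{L^2}$, so after integrating $\|F\|_{L^1}+\|H\|_{L^1}$ in time you reach only $O(\delta)(1+t)^{-(s_1+1)/(2\beta)}$, which \emph{dominates} the main term $C_\beta^2(1+t)^{-(s_1+1)/\beta}$ for large $t$ instead of being absorbed by it. Recovering the missing half power requires the uniform bound $\sup_{t}\|(a,u)(t)\|_{\dot B^{-1}_{2,\infty}}\leq C$ on the \emph{solution} (not only on the data), which gives $\int_{|\xi|\leq r}|\widehat a(t')|^2d\xi\lesssim r^{2}$ uniformly in $t'$; this is precisely the mechanism of (\ref{413})--(\ref{414}) in the paper and it is absent from your sketch. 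Even granting it, your kernel only localizes to $|\xi|\lesssim(t-t')^{-1/(2\beta)}$, so on $t'\in[t/2,t]$ the relevant ball is large and the factor $(t-t')^{-(s_1+1)/(2\beta)}$ is non-integrable once $s_1\geq 2\beta-1$; a further rebalancing of derivatives is needed there and is not supplied.

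A second, related problem appears for $s_1$ near the top regularity $s$: taking $|\gamma\widehat F\,\overline{\widehat a}+\widehat H\,\overline{\widehat u}|$ pointwise in $\xi$ forfeits the cancellations the paper relies on, namely the antisymmetry $\langle u\cdot\nabla\Lambda^{s}a,\Lambda^{s}a\rangle=-\frac12\langle \mathrm{div}\,u,|\Lambda^{s}a|^2\rangle$, the commutator estimate of Lemma \ref{lemma4}, and the redistribution $\langle\Lambda^{s+\beta-1}(a\,\mathrm{div}\,u),\Lambda^{s-\beta+1}a\rangle$. Without these, a crude bound on $|\xi|^{2s}|\widehat H||\widehat u|$ forces you to control roughly $\Lambda^{s+2\beta}u$ in $L^2$ pointwise in time (coming from $\frac{a}{1+a}(-\Delta)^{\beta}u$), which the hypothesis $(a_0,u_0)\in H^{s+\beta}$ does not provide. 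This is why the paper instead runs $L^2$ energy estimates on the difference system (\ref{443}), with the cross functional $\langle\Lambda^{s-\beta}u_N,\nabla\Lambda^{s-\beta}a_N\rangle$ and the weight $(1+t)^{b}$, tracking that every nonlinear contribution carries a factor $\delta C_2+k_0$ that can be made smaller than $C_\beta^2/4$. Your overall strategy (linear lower bound minus a small nonlinear remainder) is the right one, but the absorption step is where all the work lies, and as written it does not close.
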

\begin{proof}
We first consider the linear system:
\begin{align}\label{440}
\left\{\begin{array}{l}\partial_{t}a_{L}+\mathrm{div}\ u_{L}=0,\ \\ [1ex] \partial_{t}u_{L}+\gamma\nabla(a_{L})+(-\Delta)^{\beta}u_{L}=0,\\[1ex]
  a_{L}|_{t=0}=a_{0},\ u_{L}|_{t=0}=u_{0}.\end{array}\right.
\end{align}
According to Propositions \ref{4prop2}-\ref{4prop3}, one can arrive at $\|\Lambda^{s_1}(a_{L},u_{L})\|_{L^2}\le C(1+t)^{-\frac{1+s_1}{2\beta}}$ and $(a_{L},u_{L})\in L^{ \infty}([0,\infty),\dot{B}_{2,\infty}^{-1})$. Applying Fourier transform to (\ref{440}), we get
\begin{align}\label{441}
\left\{\begin{array}{l}\partial_{t}\widehat{a}_{L}-i\xi _{k}\widehat{u}_{L}^{k}=0,\\ [1ex] \partial_{t}\widehat{u}_{L}^{j}+|\xi|^{2\beta}\widehat{u}_{L}^{j}+\gamma i\xi_ {k}\widehat{a}_{L}=0.\end{array}\right.
\end{align}
Then we observe that
\begin{align*}
\frac{1}{2}\frac{d}{dt}\left[e^{2|\xi|^{2\beta}t}|(\sqrt{\gamma}\widehat{a}_{L},\widehat{u}_{L})|^{2}\right]-\gamma| \xi|^{2\beta}e^{2|\xi|^{2\beta}t}|\widehat{a}_{L}|^{2}=0,
\end{align*}
which implies that
\begin{align*}
|\xi|^{2s_{1}}|(\sqrt{\gamma}\widehat{a}_{L},\widehat{u}_{L})|^{2}=|\xi|^{2s_{1}}e^{-2|\xi|^{2\beta}t }|(\sqrt{\gamma}\widehat{a}_{0},\widehat{u}_{0})|^{2}+\int_{0}^{t}2\gamma|\xi|^{2(s_{1}+\beta)}e^{-2|\xi|^{2\beta}(t-t^{\prime})}|\widehat{a}_{L}|^{2}dt^{\prime}.
\end{align*}
Due to the fact $0<c_{0}=|\int_{\mathbb{R}^{2}}(a_{0}, u_{0})dx|=|(\widehat{a}_{0}(0),\widehat{u}_{0}(0))|$, we deduce that there exists $\eta>0$ such that $|(\widehat{a}_{0}(\xi),\widehat{u}_{0}(\xi))|\geq\frac{c_{0}}{2}$ if $\xi\in B(0, \eta)$. Then we have 
 \begin{align}\label{442}
   \|(\sqrt{\gamma}a_{L},u_{L})\|_{\dot{H}^{s_{1}}}^{2}& \geq\int_{|\xi|\leq\eta}|\xi|^{2s_{1}}e^{-2|\xi|^{2\beta}t}|(\sqrt{\gamma}\widehat{a}_{0}, \widehat{u}_{0})|^{2}d\xi\\ \nonumber &\geq\frac{c_{0}^{2}}{4}\int_{|\xi|\leq\eta}|\xi|^{2s_{1}}e^{-2|\xi|^{2\beta}t}d\xi\\ \nonumber
   &\geq C_{\beta}^{2}(1+t)^{-\frac{1+s_{1}}{\beta}},\nonumber
 \end{align}
 where $C_\beta^2=\frac{c_{0}^{2}}{4}\int_{|y|\le \eta}|y|^{2s_{1}}e^{-2|y|^{2\beta}}dy$.
 
 Taking $a_{N}=a-a_{L}$ and $u_{N}= u-u_{L}$, by Propositions \ref{4prop2}-\ref{4prop3}, we easily deduce $\|\Lambda^{s_{1}}(a_{N},u_{N})\|_{L^{2}}^{2} \leq C(1+t)^{-\frac{s_{1}+1}{\beta}}$ and $(a_{N},u_{N})\in L^{ \infty}([0,\infty),\dot{B}_{2,\infty}^{-1})$,  Moreover, we have
 \begin{align}\label{443}
  \left\{\begin{array}{l}\partial_{t}a_{N}+\mathrm{div}\ u_{N}=F,\  \\ [1ex] \partial_{t}u_{N}+\gamma\nabla a_{N}+(-\Delta)^{\beta}u_{N}=H, \\[1ex]
   a_{N}|_{t=0}=u_{N}|_{t=0}=0.\end{array}\right. 
 \end{align}
 According to the time decay rates for $(a_N,u_N)$ and $(a,u)$, we conclude from (\ref{443}) that
\begin{align*}
\frac{1}{2}\frac{d}{dt}\|(\sqrt{\gamma}a_{N},u_{N})\|_{L^{2}}^ {2}+\|\Lambda^{\beta}u_{N}\|_{L^{2}}^{2}&=\gamma\langle F,a_{N}\rangle+ \langle H,u_{N}\rangle\\\nonumber
 &\lesssim \|(a_{N},u_ {N}\|_{L^{4}}\|\nabla(\tau,u)\|_{L^{2}}\|(a,u)\|_{L^{4}}+\|a\|_{L^{\infty}}\|u_{N}\|_{L^{2}}\|\Lambda^{2\beta}u\|_{L^{2}}\\ \nonumber
&\le C\delta(1+t)^{-\frac{1}{\beta}-1}+\|a\|_{L^{\infty}}\|u_{N}\|_{L^{2}}\|\Lambda^{2\beta}u\|_{L^{2}},\nonumber
\end{align*}
and
\begin{align*}
\frac{d}{dt}\langle\Lambda^{2\beta-2}u_{N},\nabla a_{N}\rangle+ \gamma\|\nabla\Lambda^{\beta-1} a_{N}\|_{L^{2}}^{2}=&\langle\Lambda^{2\beta-2}( H-(-\Delta)^{\beta}u_{N}),\nabla a_{N}\rangle\\\nonumber
&-\langle\Lambda^{2\beta-2}\ (F-\text{div }u_{N}),\text{div }u_{N}\rangle\\\nonumber
\lesssim &(1+t)^{-\frac{1}{\beta}-1}+\|\Lambda^{3\beta-1}u_{N}\|_{L^2}^{2},\nonumber
\end{align*}
which lead to
\begin{align*}
&\frac{d}{dt}[\|(\sqrt{\gamma}a_{N},u_{N})\|_{L^{2}}^ {2}+k_{0}\langle\Lambda^{2\beta-2}u_{N},\nabla a_{N}\rangle]+2\|\Lambda^{\beta}u_{N}\|_{L^{2}}^{2}+ k_{0}\gamma\|\nabla^{\beta} a_{N}\|_{L^{2}}^{2}\\
&\le\ C(\delta+k_{0})(1+t)^{-1-\frac{1}{\beta}}+k_0\|\Lambda^{3\beta-1}u_{N}\|_{L^2}^{2}.
\end{align*}
Then we conclude that
\begin{align}\label{444}
&\frac{d}{dt}[\|(\sqrt{\gamma}a_{N},u_{N})\|_{L^{2}}^{2}+2k_{0}\langle\Lambda^{2\beta-2}u_{N},\nabla a_{N}\rangle]+\frac{ k_{0} C_{2}}{1+t}\|a_{N}\|_{L^{2}}^{2}+\frac{2C_{2}}{1+t}\|u_{N}\|_{L^{2}}^{2} \\\nonumber
 \lesssim &\frac{C_{2}}{1+t}\int_{S_{1}(t)}|\widehat{a_{N}}(\xi)|^{2}+|\widehat{ u_{N}}(\xi)|^{2}d\xi+(\delta+k_{0})(1+t)^{-\frac{1}{\beta}-1}+k_0\|\Lambda^{3\beta-1}u_{N}\|_{L^2}^{2}.\nonumber
\end{align}
We deduce from a similar derivation of (\ref{406}) and (\ref{413}) that
\begin{align}\label{445}
\int_{S_{1}(t)}|\widehat{a_{N}}|^{2}+|\widehat{u_{N}}|^{2}d \xi&\lesssim \int_{S_{1}(t)}\int_{0}^{t}|\widehat{F}\cdot\overline{\widehat{a_ {N}}}|+|\widehat{H}\cdot\overline{\widehat{u_{N}}}|dt^{\prime}d\xi\\ \nonumber
&\lesssim (1+t)^{-\frac{1}{\beta}}\int_{0}^{t}\|(u,\tau)\|_{L^{2}}\left(\|\nabla a\|_{L^{2}}+\|\nabla u\|_{L^{2}}+\|\Lambda^{2\beta} u\|_{L^{2}}\right)\|(u_{N},\tau_{N})\|_{\dot{B}^{-1}_{2,\infty}}dt^{\prime}\\ &\lesssim \delta(1+t)^{-\frac{1}{\beta}}.\nonumber
\end{align}
Plugging (\ref{445}) into (\ref{444}), we find
\begin{align}\label{446}
&\frac{d}{dt}\left[\|(\sqrt{\gamma}a_{N},u_{N})\|_{L^{2}}^{2}+2k_{0}\langle\Lambda^{2\beta-2}u_{N},\nabla a_{N}\rangle\right]+\frac{ k_{0}C_{2}}{(1+t)}\|a_{N}\|_{L^{2}}^{2}+\frac{2C_{2}}{1+t}\|u_{N}\|_{L^{2}}^{2} \\ \nonumber
\lesssim& \left(\delta C_2+k_{0}\right)(1+t)^{-\frac{1}{\beta}-1}+k_0\|\Lambda^{3\beta-1}u_{N}\|_{L^2}^{2}.\nonumber
\end{align}
Thanks to Propositions \ref{4prop2}-\ref{4prop3} and (\ref{417}), integrating (\ref{446}) over $[0,t]$, we arrive at
\begin{align*}
&(1+t)^{1+\frac{1}{\beta}}\|(\sqrt{\gamma}a_{N},u_{N})\|_{L^{2}}^{2}\\\nonumber
\lesssim &\ \int_{0}^{t}2k_{0}(1+t^{\prime})^{\frac{1}{\beta}}\langle\Lambda^{2\beta-2}u_{N},\nabla a_{N}\rangle dt^{\prime}
+2k_{0}(1+t)^{1+\frac{1}{\beta}}\langle\Lambda^{2\beta-2}u_{N},\nabla a_{N}\rangle+(\delta C_2+k_{0})(1+t)\\\nonumber
&+k_0\int_{0}^{t}(1+t^{\prime})^{1+\frac{1}{\beta}}\|\Lambda^{3\beta-1}u_{N}\|_{L^2}^{2}dt^{\prime}\\
 \lesssim &\ \left(\delta C_2+k_{0}\right)(1+t),
\end{align*}
which yields
\begin{align}\label{447}
\|(\sqrt{\gamma}a_{N},u_{N})\|_{L^{2}}^{2}\le C(\delta C_2+k_{0})(1+t)^{-\frac{1}{\beta}}.
\end{align}
Applying $\Lambda^{s_{1}}$ to (\ref{443}), we get
\begin{align*}
\left\{\begin{array}{l}\partial_{t}\Lambda^{s_{1}}a_{N}+ \mathrm{div} \Lambda^{s_{1}}u_{N}=\Lambda^{s_{1}}F,\\ [1ex] \partial_{t}\Lambda^{s_{1}}u_{N}+\gamma \nabla\Lambda^{s_{1}}a_{N}+(-\Delta)^{\beta}\Lambda^{s_{1}}u_{N}=\Lambda^{s_{1}}H.\end{array}\right.
\end{align*}
Standard energy estimate yields 
 \begin{align}\label{448}
&\frac{1}{2} \frac{d}{d t}\left[(1+t)^{b}\|\Lambda^{s}(\sqrt{\gamma}a_N, u_N)\|_{L^{2}}^{2}\right]+(1+t)^{b}\|\Lambda^{s+\beta} u_N\|_{L^{2}}^{2}\\\nonumber
=&b(1+t)^{b-1}\|\Lambda^{s}(\sqrt{\gamma}a_N, u_N)\|_{L^{2}}^{2}+(1+t)^{b}\left\langle\gamma\Lambda^{s} F, \Lambda^{s} a_N\right\rangle+(1+t)^{b}\left\langle\Lambda^{s} H, \Lambda^{s} u_N\right\rangle.\nonumber
\end{align}
 Using Lemma \ref{lemma4}, Lemma \ref{lemma5} and Proposition \ref{4prop3}, we infer that
 \begin{align*}
&(1+t)^{b}\gamma\left\langle\Lambda^{s} F, \Lambda^{s} a_N\right\rangle \\
=&(1+t)^{b}(-\gamma\langle[\Lambda^{s},u\cdot\nabla]a,\Lambda^{s}a_N\rangle-\gamma\langle u\cdot\nabla \Lambda^{s}a,\Lambda^{s}a_{N}\rangle-\gamma\langle\Lambda^{s+\beta-1}(a\mathrm{div}u),\Lambda^{s-\beta+1}a_N\rangle)\\\nonumber
=&(1+t)^{b}(-\gamma\langle[\Lambda^{s},u\cdot\nabla]a,\Lambda^{s}a_N\rangle+\frac{\gamma}{2}\langle \mathrm{div}u, |\Lambda^{s}a_{N}|^{2}\rangle-\gamma\langle u\cdot\nabla \Lambda^{s}a_{L},\Lambda^{s}a_{N}\rangle\\
&-\gamma\langle\Lambda^{s+\beta-1}(a\mathrm{div}u),\Lambda^{s-\beta+1}a_N\rangle)\\\nonumber
\lesssim &(1+t)^{b}\left(\ \|\Lambda^{s+\beta} u\|_{L^{2}}\|\Lambda^{s-\beta+1} a_N\|_{L^{2}}\|\nabla a\|_{L^{2}}+\|\Lambda^{s-\beta+1} a_N\|_{L^{2}}\|\Lambda^{s} a_N\|_{L^{2}}\|\Lambda^{1+\beta}u\|_{L^{2}}\right.\\\nonumber
&\left.+\|\Lambda^{s+\beta}u \|_{L^{2}}\|\Lambda^{s-\beta+1} a_N\|_{L^{2}}\|a\|_{L^{\infty}}
+\delta(1+t)^{-\frac{s+1}{\beta}-1}\right)\\\nonumber
\lesssim & \delta(1+t)^{b}\|\Lambda^{s+\beta}u_{N}\|_{L^{2}}^{2}+\delta\|\Lambda^{s-\beta+1}a_{N}\|_{L^{2}}^{2}+\delta(1+t)^{-\frac{s+1}{\beta}+b-1}.
 \end{align*}
 Along the same line, we have
 \begin{align*}
 &(1+t)^{b}\left\langle\Lambda^{s} H, \Lambda^{s} u_N\right\rangle\\
 \lesssim &(1+t)^{b}\|\Lambda^{s+\beta}u_N\|_{L^2}(\|\Lambda^{s-\beta+1}a\|_{L^2}\|a\|_{L^\infty}+\|\Lambda^{s}a\|_{L^{2}}\|\Lambda^{2-\beta} a\|_{L^{2}}\\
&+\|\Lambda^{s-\beta+1}u\|_{L^2}\|\nabla u\|_{L^2}+\|\Lambda^{s}a\|_{L^{2}}\|\Lambda^{1+\beta}u\|_{L^{2}}+\|\Lambda^{s+\beta}u\|_{L^{2}}\|a \|_{L^{\infty}})\\
\lesssim&\ \delta(1+t)^{b}\|\Lambda^{s+\beta}u_{N}\|_{L^{2}}^{2}+\delta\|\Lambda^{s-\beta+1}a_{N}\|_{L^{2}}^{2}+\delta(1+t)^{-\frac{s+1}{\beta}+b-1}.
 \end{align*}
 Substituting the above inequalities into (\ref{448}) gives rise to
 \begin{align}\label{449}
&\frac{1}{2} \frac{d}{d t}\left[(1+t)^{b}\|\Lambda^{s}(\sqrt{\gamma}a_N, u_N)\|_{L^{2}}^{2}\right]+(1+t)^{b}\|\Lambda^{s+\beta} u_N\|_{L^{2}}^{2}\\\nonumber
\le &\ b(1+t)^{b-1}\|\Lambda^{s}(\sqrt{\gamma}a_N, u_N)\|_{L^{2}}^{2}+C\delta(1+t)^{b}\|\Lambda^{s+\beta}u_{N}\|_{L^{2}}^{2}+C\delta\|\Lambda^{s-\beta+1}a_{N}\|_{L^{2}}^{2}\\\nonumber
&+C\delta(1+t)^{-\frac{s+1}{\beta}+b-1}.\nonumber
\end{align}
Along the same line to the proof of (\ref{449}), we achieve
\begin{align}\label{450}
&\frac{d}{d t}\langle\Lambda^{s-\beta} u_N,\nabla \Lambda^{s-\beta} a_N\rangle+\gamma\|\nabla \Lambda^{s-\beta} a_N\|_{L^{2}}^{2} \\\nonumber
=&\langle-\Lambda^{s-\beta}F +\mathrm{div}\Lambda^{s-\beta} u_N, \mathrm{div}\Lambda^{s-\beta} u_N\rangle+\langle\Lambda^{s-\beta}(H-\Lambda^{2\beta} u_N), \nabla \Lambda^{s-\beta} a_N\rangle\\\nonumber
\le &C\|\Lambda^{s+\beta}u_{N}\|_{L^{2}}^{2}+C\|\Lambda^{s-\beta+1}u_{N}\|_{L^{2}}^{2}+C\left(\delta+\frac{1}{100}\right)\|\Lambda^{s-\beta+1}a_{N}\|_{L^{2}}^{2}+C\delta(1+t)^{-\frac{s+1}{\beta}+b-1}.
 \end{align}
Thanks to (\ref{434}) again, combining (\ref{449}) and (\ref{450}), we get
 \begin{align}\label{451}
&\frac{d}{d t}\left[(1+t)^{b}\|\Lambda^{s}(\sqrt{\gamma}a_N, u_N)\|_{L^{2}}^{2}+2\langle\Lambda^{s-\beta} u_N,\nabla \Lambda^{s-\beta} a_N\rangle\right]\\\nonumber
&+(1+t)^{b}\|\Lambda^{s+\beta} u_N\|_{L^{2}}^{2}+\gamma\|\nabla \Lambda^{s-\beta} a_N\|_{L^{2}}^{2}\\\nonumber
\le &b(1+t)^{b-1}\|\Lambda^{s}(\sqrt{\gamma}a_N, u_N)\|_{L^{2}}^{2}+C\delta(1+t)^{-\frac{s+1}{\beta}+b-1}.\nonumber
 \end{align}
 It follows from (\ref{451}) that
 \begin{align}\label{452}
&\frac{d}{d t}\left[(1+t)^{b}\|\Lambda^{s}(\sqrt{\gamma}a_N, u_N)\|_{L^{2}}^{2}+2\langle\Lambda^{s-\beta} u_N,\nabla \Lambda^{s-\beta} a_N\rangle\right]+\frac{C_2}{2}(1+t)^{b-1}\|\Lambda^{s} (a_N,u_N)\|_{L^{2}}^{2}\\\nonumber
\le &C_{2}(1+t)^{b-1}\int_{S_{1}(t)}|\xi|^{2s}\left(|\widehat{a_{N}}(\xi)|^{2}+|\widehat{u_{N}}(\xi)|^{2}\right)d \xi+C\delta(1+t)^{-\frac{s+1}{\beta}+b-1}\\\nonumber
\lesssim& \left[\delta+(\delta C_{2}+k_{0})C_{2}\right](1+t)^{-\frac{s+1}{\beta}+b-1}.
 \end{align}
Denote that 
 \begin{align*}
  \widetilde{E}_{s}&=(1+t)^{b}\|\Lambda^{s}(\sqrt{\gamma}a_N, u_N)\|_{L^{2}}^{2}+2\langle\Lambda^{s-\beta} u_N,\nabla \Lambda^{s-\beta} a_N\rangle.
 \end{align*}
Taking $C_2$ big enough, by (\ref{452}), then we obtain
\begin{align*}
  &(1+t)^{\frac{s+1}{\beta}-b+1}\widetilde{E}_{s}+\frac{C_{2}}{2}\int_{0}^{t}(1+t^{\prime})^{\frac{s+1}{\beta}}\|\Lambda^{s}(\sqrt{\gamma}a_N,u_N)\|_{L^{2}}^{2}dt^{\prime} \\
  \lesssim &\int_0^t(1+t^{\prime})^{\frac{s+1}{\beta}-b}\widetilde{E}_{s}dt^{\prime}+\left[\delta+(\delta C_{2}+k_{0})C_{2}\right](1+t)\\
  \lesssim &\int_0^t(1+t^{\prime})^{\frac{s+1}{\beta}}\|\Lambda^{s}(a_N,u_N)\|_{L^2}^2dt^{\prime}+\int_0^t(1+t^{\prime})^{\frac{1-s}{\beta}}\|(\sqrt{\gamma}a_N,u_N)\|_{L^2}^2dt^{\prime} +\left[\delta+(\delta C_{2}+k_{0})C_{2}\right](1+t)\\
  \lesssim &\int_{0}^{t}(1+t^{\prime})^{\frac{s+1}{\beta}}\|\Lambda^{s}(a_N,u_N)\|_{L^{2}}^{2}dt^{\prime}+\left[\delta+(\delta C_{2}+k_{0})C_{2}\right](1+t).
\end{align*}
Moreover, we infer that
\begin{align*}
&(1+t)^{\frac{s+1}{\beta}+1}\|\Lambda^{s}(\sqrt{\gamma}a_N,u_N)\|_{L^{2}}^{2} \\
\le &\left[\delta+(\delta C_{2}+k_{0})C_{2}\right](1+t)-2(1+t)^{\frac{s+1}{\beta}+1-b}\langle\Lambda^{s-\beta}u_N,\nabla\Lambda^{s-\beta}a_N\rangle\\
\le &\left[\delta+(\delta C_{2}+k_{0})C_{2}\right](1+t)+\frac{1}{2}(1+t)^{\frac{s+1}{\beta}+1}\|\Lambda^{s}(a_N,u_N)\|_{L^2}^2+C(1+t)^{\frac{1-s}{\beta}+1}\|(a_N,u_N)\|_{L^2}^2 \\
 \le &\left[\delta+(\delta C_{2}+k_{0})C_{2}\right](1+t)+\frac{1}{2}(1+t)^{\frac{s+1}{\beta}+1}\|\Lambda^{s}(a_N,u_N)\|_{L^{2}}^{2}.
\end{align*}
which gives rise to 
\begin{align}\label{453}
\|\Lambda^{s}(\sqrt{\gamma}a_{N},u_{N})\|_{L^{2}}^{2}\le C[\delta+(\delta C_{2}+k_{0})C_{2}](1+t)^{-\frac{s+1}{\beta}}.
\end{align}
Taking $\delta+(\delta C_{2}+k_{0})C_{2}$ small enough, from (\ref{447}) and (\ref{453}), we arrive at
\begin{align}\label{455}
\|\Lambda^{s_1}(\sqrt{\gamma}a_{N},u_{N})\|_{L^{2}}^{2}\le \frac{C_{\beta}^{2}}{4}(1+t)^{-\frac{s_1+1}{\beta}}.
\end{align}
Combining (\ref{442}) and (\ref{455}), it comes out
\begin{align*}
\|\Lambda^{s_1}(\sqrt{\gamma}a,u)\|_{L^{2}}^{2}\ge \frac{C_{\beta}^{2}}{4}(1+t)^{-\frac{s_1+1}{\beta}}.
\end{align*}
Therefore, we conclude the proof of Proposition \ref{4prop4}.
\end{proof}
\textbf{Proof of Theorem \ref{1theo2}:}
\begin{proof}
  Combining the proof of Proposition \ref{4prop3} and Proposition \ref{4prop4}, we finish the proof of Theorem \ref{1theo2}.
\end{proof}
\bigskip

 \vskip .1in
 \noindent{\bf Acknowledgement} This work was partially supported by the National Natural Science Foundation of
  China (No.12571261).

 \vskip .1in
\noindent{\bf Data Availability Statement} Data sharing is not applicable to this article as no
data sets were generated or analysed during the current study.

\vskip .1in

\noindent{\bf Conflict of Interest} The authors declare that they have no conflict of interest. The
authors also declare that this manuscript has not been previously published, and
will not be submitted elsewhere before your decision.

\phantomsection
\bibliographystyle{abbrv}
\bibliography{Manuscript1209}

@book {Bahouri2011,
    AUTHOR = {Bahouri, Hajer and Chemin, Jean-Yves and Danchin, Rapha\"{e}l},
     TITLE = {Fourier analysis and nonlinear partial differential equations},
    SERIES = {Grundlehren der Mathematischen Wissenschaften [Fundamental
              Principles of Mathematical Sciences]},
    VOLUME = {343},
 PUBLISHER = {Springer, Heidelberg},
      YEAR = {2011},
     PAGES = {xvi+523},
      ISBN = {978-3-642-16829-1},
   MRCLASS = {35-02 (35L72 35Q30 42-02 42B37 76B03 76D03 76N10)},
  MRNUMBER = {2768550},
MRREVIEWER = {Peter R. Massopust},
       DOI = {10.1007/978-3-642-16830-7},
       URL = {https://doi.org/10.1007/978-3-642-16830-7},
}

@article {Schonbek1985,
    AUTHOR = {Schonbek, Maria Elena},
     TITLE = {{$L^2$} decay for weak solutions of the {N}avier-{S}tokes
              equations},
   JOURNAL = {Arch. Rational Mech. Anal.},
  FJOURNAL = {Archive for Rational Mechanics and Analysis},
    VOLUME = {88},
      YEAR = {1985},
    NUMBER = {3},
     PAGES = {209--222},
      ISSN = {0003-9527},
   MRCLASS = {35Q10 (76D05)},
  MRNUMBER = {775190},
MRREVIEWER = {Yoshikazu Giga},
       DOI = {10.1007/BF00752111},
       URL = {http://dx.doi.org/10.1007/BF00752111},
}

@article {Schonbek1991,
    AUTHOR = {Schonbek, Maria E.},
     TITLE = {Lower bounds of rates of decay for solutions to the
              {N}avier-{S}tokes equations},
   JOURNAL = {J. Amer. Math. Soc.},
  FJOURNAL = {Journal of the American Mathematical Society},
    VOLUME = {4},
      YEAR = {1991},
    NUMBER = {3},
     PAGES = {423--449},
      ISSN = {0894-0347},
   MRCLASS = {35Q30 (35B40 76D05)},
  MRNUMBER = {1103459},
MRREVIEWER = {Michael Wiegner},
       DOI = {10.2307/2939262},
       URL = {http://dx.doi.org/10.2307/2939262},
}

@article {Li2011Large,
    AUTHOR = {Li, Hai Liang and Zhang, Ting},
     TITLE = {Large time behavior of isentropic compressible
              {N}avier-{S}tokes system in {$R^3$}},
   JOURNAL = {Math. Methods Appl. Sci.},
  FJOURNAL = {Mathematical Methods in the Applied Sciences},
    VOLUME = {34},
      YEAR = {2011},
    NUMBER = {6},
     PAGES = {670--682},
      ISSN = {0170-4214},
   MRCLASS = {35Q35 (35B40 76N10)},
  MRNUMBER = {2814719},
MRREVIEWER = {Fa-gui Liu},
       DOI = {10.1002/mma.1391},
       URL = {https://doi.org/10.1002/mma.1391},
}

@article {Xu2019,
    AUTHOR = {Xu, Jiang},
     TITLE = {A low-frequency assumption for optimal time-decay estimates to
              the compressible {N}avier-{S}tokes equations},
   JOURNAL = {Comm. Math. Phys.},
  FJOURNAL = {Communications in Mathematical Physics},
    VOLUME = {371},
      YEAR = {2019},
    NUMBER = {2},
     PAGES = {525--560},
      ISSN = {0010-3616},
   MRCLASS = {35Q35 (35B40)},
  MRNUMBER = {4019913},
MRREVIEWER = {Beno\^{\i}t P. Desjardins},
       DOI = {10.1007/s00220-019-03415-6},
       URL = {https://doi.org/10.1007/s00220-019-03415-6},
}

@article {1959On,
    AUTHOR = {Nirenberg, L.},
     TITLE = {On elliptic partial differential equations},
   JOURNAL = {Ann. Scuola Norm. Sup. Pisa Cl. Sci. (3)},
  FJOURNAL = {Annali della Scuola Normale Superiore di Pisa. Classe di
              Scienze. Serie III},
    VOLUME = {13},
      YEAR = {1959},
     PAGES = {115--162},
      ISSN = {0391-173X},
   MRCLASS = {35.00},
  MRNUMBER = {109940},
MRREVIEWER = {L. Garding},
}

@book {2004Feireisl,
	AUTHOR = {Feireisl, Eduard},
	TITLE = {Dynamics of viscous compressible fluids},
	SERIES = {Oxford Lecture Series in Mathematics and its Applications},
	VOLUME = {26},
	PUBLISHER = {Oxford University Press, Oxford},
	YEAR = {2004},
	PAGES = {xii+212},
	ISBN = {0-19-852838-8},
	MRCLASS = {76N10 (35Q35 76N15)},
	MRNUMBER = {2040667},
	MRREVIEWER = {Piotr Bogus\l aw Mucha},
}

@book {1998Lions,
	AUTHOR = {Lions, Pierre Louis},
	TITLE = {Mathematical topics in fluid mechanics. {V}ol. 2},
	SERIES = {Oxford Lecture Series in Mathematics and its Applications},
	VOLUME = {10},
	NOTE = {Compressible models,
	Oxford Science Publications},
	PUBLISHER = {The Clarendon Press, Oxford University Press, New York},
	YEAR = {1998},
	PAGES = {xiv+348},
	ISBN = {0-19-851488-3},
	MRCLASS = {76-02 (35-02 35Q30 76N10)},
	MRNUMBER = {1637634},
	MRREVIEWER = {Denis Serre},
}

@book {1996Lions,
	AUTHOR = {Lions, Pierre Louis},
	TITLE = {Mathematical topics in fluid mechanics. {V}ol. 1},
	SERIES = {Oxford Lecture Series in Mathematics and its Applications},
	VOLUME = {3},
	NOTE = {Incompressible models,
	Oxford Science Publications},
	PUBLISHER = {The Clarendon Press, Oxford University Press, New York},
	YEAR = {1996},
	PAGES = {xiv+237},
	ISBN = {0-19-851487-5},
	MRCLASS = {76-02 (35Q30 35Q35 76D05)},
	MRNUMBER = {1422251},
	MRREVIEWER = {Denis Serre},
}

@article{kato1,
author = {Kato, Tosio and Ponce, Gustavo},
title = {Commutator estimates and the euler and navier-stokes equations},
journal = {Commun. Pure Appl. Math.},
volume = {41},
number = {7},
pages = {891-907},
year = {1988},
}

@article{1979Matsumura,
  title={The initial value problem for the equations of motion of compressible viscous and heat-conductive fluids},
  author={ Matsumura, A  and  Nishida, T },
  journal={Proc. Jpn. Acad.,Ser. A,Math. Sci.},
  volume={55},
  number={9},
  doi = {https://doi.org/10.3792/pjaa.55.337},
  pages={17408},
  year={1979},
}

@article{1980Matsumura,
  title={The initial value problem for the equations of motion of viscous and heat-conductive gases},
  author={ Matsumura, Akitaka  and  Nishida, Takaaki },
  journal={J. Math. Kyoto Univ.},
doi={https://doi.org/10.1215/kjm/1250522322},
  volume={20},
  pages={67-104},
  year={1980},
}

@article{PONCE1985399,
title = {Global existence of small solutions to a class of nonlinear evolution equations},
journal = {Nonlinear Anal. Theory Methods Appl.},
volume = {9},
number = {5},
pages = {399-418},
year = {1985},
issn = {0362-546X},
doi = {https://doi.org/10.1016/0362-546X(85)90001-X},
url = {https://www.sciencedirect.com/science/article/pii/0362546X8590001X},
author = {Gustavo Ponce},
}

@article{xin1988blowup,
  title={Blowup of smooth solutions to the compressible {N}avier-{S}tokes equation with compact density},
  author={Xin, Zhouping},
  journal={Comm. Pure Appl. Math.},
  volume={51},
  number={3},
doi = {https://doi.org/10.1002/(SICI)1097-0312(199803)51:3<229::AID-CPA1>3.0.CO;2-C},
url = {https://onlinelibrary.wiley.com/doi/abs/10.1002/%28SICI%291097-0312%28199803%2951%3A3%3C229%3A%3AAID-CPA1%3E3.0.CO%3B2-C},
  pages={299-440},
  year={1988},
  publisher={Wiley Online Library},
}

@article {MR1779621,
    AUTHOR = {Danchin, R.},
     TITLE = {Global existence in critical spaces for compressible
              {N}avier-{S}tokes equations},
   JOURNAL = {Invent. Math.},
  FJOURNAL = {Inventiones Mathematicae},
    VOLUME = {141},
      YEAR = {2000},
    NUMBER = {3},
     PAGES = {579--614},
      ISSN = {0020-9910,1432-1297},
   MRCLASS = {76N10 (35Q30)},
  MRNUMBER = {1779621},
MRREVIEWER = {Kevin\ R.\ Zumbrun},
       DOI = {10.1007/s002220000078},
       URL = {https://doi.org/10.1007/s002220000078},
}

@article{Guo01012012,
author = {Yan Guo and Yanjin Wang},
title = {Decay of Dissipative Equations and Negative Sobolev Spaces},
journal = {Communications in Partial Differential Equations},
volume = {37},
number = {12},
pages = {2165--2208},
year = {2012},
publisher = {Taylor \& Francis},
doi = {10.1080/03605302.2012.696296},
URL = {https://doi.org/10.1080/03605302.2012.696296},
eprint = {  https://doi.org/10.1080/03605302.2012.696296}
}

@article {MR2675485,
    AUTHOR = {Chen, Qionglei and Miao, Changxing and Zhang, Zhifei},
     TITLE = {Global well-posedness for compressible {N}avier-{S}tokes
              equations with highly oscillating initial velocity},
   JOURNAL = {Comm. Pure Appl. Math.},
  FJOURNAL = {Communications on Pure and Applied Mathematics},
    VOLUME = {63},
      YEAR = {2010},
    NUMBER = {9},
     PAGES = {1173--1224},
      ISSN = {0010-3640,1097-0312},
   MRCLASS = {35B65 (35Q35 76N10)},
  MRNUMBER = {2675485},
MRREVIEWER = {Paolo\ Maremonti},
       DOI = {10.1002/cpa.20325},
       URL = {https://doi.org/10.1002/cpa.20325},
}

@article{DUAN2007220,
title = {Optimal {${L}^p$}–{${L}^q$} convergence rates for the compressible {N}avier–{S}tokes equations with potential force},
journal = {Journal of Differential Equations},
volume = {238},
number = {1},
pages = {220-233},
year = {2007},
issn = {0022-0396},
doi = {https://doi.org/10.1016/j.jde.2007.03.008},
url = {https://www.sciencedirect.com/science/article/pii/S0022039607000836},
author = {Renjun Duan and Hongxia Liu and Seiji Ukai and Tong Yang},
keywords = {Compressible Navier–Stokes equations, Potential force, Optimal convergence rate,  estimate},
abstract = {In this paper, we are concerned with the optimal Lp–Lq convergence rates for the compressible Navier–Stokes equations with a potential external force in the whole space. Under the smallness assumption on both the initial perturbation and the external force in some Sobolev spaces, the optimal convergence rates of the solution in Lq-norm with 2⩽q⩽6 and its first order derivative in L2-norm are obtained when the initial perturbation is bounded in Lp with 1⩽p<6/5. The proof is based on the energy estimates on the solution to the nonlinear problem and some Lp–Lq estimates on the semigroup generated by the corresponding linearized operator.}
}

@article {MR4188989,
    AUTHOR = {Xin, Zhouping and Xu, Jiang},
     TITLE = {Optimal decay for the compressible {N}avier-{S}tokes equations
              without additional smallness assumptions},
   JOURNAL = {J. Differential Equations},
  FJOURNAL = {Journal of Differential Equations},
    VOLUME = {274},
      YEAR = {2021},
     PAGES = {543--575},
      ISSN = {0022-0396,1090-2732},
   MRCLASS = {76N06 (35K65 35L65 35Q30)},
  MRNUMBER = {4188989},
MRREVIEWER = {Piotr\ Biler},
       DOI = {10.1016/j.jde.2020.10.021},
       URL = {https://doi.org/10.1016/j.jde.2020.10.021},
}

@article {MR3609245,
    AUTHOR = {Danchin, Rapha\"el and Xu, Jiang},
     TITLE = {Optimal time-decay estimates for the compressible
              {N}avier-{S}tokes equations in the critical {$L^p$} framework},
   JOURNAL = {Arch. Ration. Mech. Anal.},
  FJOURNAL = {Archive for Rational Mechanics and Analysis},
    VOLUME = {224},
      YEAR = {2017},
    NUMBER = {1},
     PAGES = {53--90},
      ISSN = {0003-9527,1432-0673},
   MRCLASS = {35Q35 (35B40 76N10)},
  MRNUMBER = {3609245},
MRREVIEWER = {Olga\ S.\ Rozanova},
       DOI = {10.1007/s00205-016-1067-y},
       URL = {https://doi.org/10.1007/s00205-016-1067-y},
}

@article {MR2679372,
    AUTHOR = {Charve, Fr\'ed\'eric and Danchin, Rapha\"el},
     TITLE = {A global existence result for the compressible
              {N}avier-{S}tokes equations in the critical {$L^p$} framework},
   JOURNAL = {Arch. Ration. Mech. Anal.},
  FJOURNAL = {Archive for Rational Mechanics and Analysis},
    VOLUME = {198},
      YEAR = {2010},
    NUMBER = {1},
     PAGES = {233--271},
      ISSN = {0003-9527,1432-0673},
   MRCLASS = {35Q30 (76N10)},
  MRNUMBER = {2679372},
MRREVIEWER = {Wengu\ Chen},
       DOI = {10.1007/s00205-010-0306-x},
       URL = {https://doi.org/10.1007/s00205-010-0306-x},
}

@article {MR2847531,
    AUTHOR = {Haspot, Boris},
     TITLE = {Existence of global strong solutions in critical spaces for
              barotropic viscous fluids},
   JOURNAL = {Arch. Ration. Mech. Anal.},
  FJOURNAL = {Archive for Rational Mechanics and Analysis},
    VOLUME = {202},
      YEAR = {2011},
    NUMBER = {2},
     PAGES = {427--460},
      ISSN = {0003-9527,1432-0673},
   MRCLASS = {76N10 (35D35 35L45 35L60 35Q35)},
  MRNUMBER = {2847531},
MRREVIEWER = {Magali\ L\'ecureux-Mercier},
       DOI = {10.1007/s00205-011-0430-2},
       URL = {https://doi.org/10.1007/s00205-011-0430-2},
}

@article{li2022,
      title={Non-uniqueness for the hypo-viscous compressible {N}avier-{S}tokes equations}, 
      author={Yachun Li and Peng Qu and Zirong Zeng and Deng Zhang},
      year={2022},
      JOURNAL={arXiv:2212.05844},
      url={https://arxiv.org/abs/2212.05844}, 
}

@article {MR4643428,
    AUTHOR = {Wang, Shu and Zhang, Shuzhen},
     TITLE = {The initial value problem for the equations of motion of
              fractional compressible viscous fluids},
   JOURNAL = {J. Differential Equations},
  FJOURNAL = {Journal of Differential Equations},
    VOLUME = {377},
      YEAR = {2023},
     PAGES = {369--417},
      ISSN = {0022-0396,1090-2732},
   MRCLASS = {35Q70 (35L65 76N10)},
  MRNUMBER = {4643428},
       DOI = {10.1016/j.jde.2023.09.012},
       URL = {https://doi.org/10.1016/j.jde.2023.09.012},
}

@article {MR4897629,
    AUTHOR = {Wang, Shu and Zhang, Shuzhen},
     TITLE = {The initial value problem of the fractional compressible
              {N}avier-{S}tokes-{P}oisson system},
   JOURNAL = {J. Differential Equations},
  FJOURNAL = {Journal of Differential Equations},
    VOLUME = {438},
      YEAR = {2025},
     PAGES = {Paper No. 113359, 80},
      ISSN = {0022-0396,1090-2732},
   MRCLASS = {35Q35 (35A01 35B40 35R11)},
  MRNUMBER = {4897629},
MRREVIEWER = {Yonghui\ Zhou},
       DOI = {10.1016/j.jde.2025.113359},
       URL = {https://doi.org/10.1016/j.jde.2025.113359},
}


  

\end{document}